\author{Nick Dewaele}
\date{}
\title{Which variables of a numerical problem cause ill-conditioning?}
\DeclareMathAlphabet{\mathpzc}{OT1}{pzc}{m}{it}
\newtheorem{theorem}{Theorem}
\newtheorem{proposition}[theorem]{Proposition}
\newtheorem{lemma}[theorem]{Lemma}
\newtheorem{remark}[theorem]{Remark}
\newtheorem{corollary}[theorem]{Corollary}
\theoremstyle{definition}
\newtheorem{definition}[theorem]{Definition}
\newcommand{\tspace}{{\mathcal{T}}}
\newcommand{\tuck}{{G_T}}
\newcommand{\core}{{\mathpzc{C}}}
\newcommand{\kron}{{\oslash}}
\renewcommand{\epsilon}{\varepsilon}
\DeclareMathOperator{\nullity}{null}
\DeclareMathOperator{\img}{Im}
\DeclareMathOperator*{\argmin}{arg\,min}
\begin{document}

\subjclass{15A12, 15A23, 49Q12, 53B20, 15A69, 65F35}
\keywords{Condition number, elimination of variables, underdetermined system, tensor decomposition, Riemannian geometry}
\thanks{This research was partially funded by Internal Funds KU Leuven with project number C16/21/00}

\begin{abstract}
    We study a broad class of numerical problems that can be defined as the solution of a system of (nonlinear) equations for a subset of the dependent variables.
    Given a system of the form $F(x,y,z) = c$ with multivariate input $x$ and dependent variables $y$ and $z$, we define and give concrete expressions for the condition number of solving for a value of $y$ such that $F(x,y,z) = c$ for some unspecified $z$. This condition number can be used to determine which of the dependent variables of a numerical problem are the most ill-conditioned. We show how this can be used to explain the condition number of the problem of solving for all dependent variables, even if the solution is not unique. The concepts are illustrated with Tucker decomposition of tensors as an example problem.
\end{abstract}

\maketitle

\section{Introduction}

Numerical problems in finite dimension can often be formulated as the solution of a system of equations of the form $F(x,y) = 0$, where $x$ is the input variable, living in some input space $\mathcal{X}$, and the goal is to solve the equation for $y$ in some output space $\mathcal{Y}$. Quantifying the sensitivity of the solution with respect to the input is an essential aspect of numerical analysis.

If the solution $y$ is unique given the input $x$, the \emph{condition number} of the problem measures its sensitivity and is formally defined as follows \cite{Rice1966}. Suppose that $\mathcal{X}$ and $\mathcal{Y}$ are metric spaces with distances $d_{\mathcal X}$ and $d_{\mathcal{Y}}$, respectively and that $F(x,y) = 0$ at a particular pair $(x_0, y_0)$.
Then the (absolute) condition number at $(x_0, y_0)$ is 
\begin{equation}
    \label{eq: def cond limsup}
\kappa[F](x_0, y_0) :=
 \limsup_{\substack{x \to x_0 \\ F(x,y) = 0}} \frac{d_{\mathcal Y}(y_0, y)}{d_{\mathcal X}(x_0, x)}
.\end{equation}
Generalisations of this definition are used if multiple solutions may correspond to the same input; see \cite[Chapter 14]{Burgisser2013} for when the solution sets $\{y \,|\, F(x,y) = 0\}$ consist of isolated points and \cite{Dewaele2024} for when they are manifolds of non-zero dimension. In those cases, the value of $y$ used in \eqref{eq: def cond limsup} is the solution of $F(x,y) = 0$ that is closest to $y_0$.

Some numerical problems involve additional, \emph{latent} variables that are neither the input nor the solution. In those cases, we can model the problem as a system $F(x,y,z) = 0$ where $x$ is the input and a value of $y$ is desired such that $F(x,y,z) = 0$ for some $z$ which is not considered part of the solution.
For instance, suppose that $X \in \mathbb{R}^{m \times n}$ is a matrix that is assumed to have rank $k$ and one is interested in finding a matrix $Y \in \mathbb{R}^{m \times k}$ such that $\operatorname{span} X = \operatorname{span} Y$. We can model this problem as the system $X - YZ = 0$ where $Z \in \mathbb{R}^{k \times n}$.

It can be useful to have a theory of condition in the presence of latent variables. Informally, such a condition number should measure, for the worst perturbation to $x$, the change in $y$ such that $F(x,y,z) = 0$ for some $z$.
While we have not found the general form of this in the literature, it is surely known to the experts how the condition number would follow from \eqref{eq: def cond limsup} if $y$ is uniquely determined given $x$. Indeed, if $F(x,y,z) = 0$ has only one solution $y$ for every $x$, this solution is some function $y = H(x)$. Hence, the desired condition number is given by applying \eqref{eq: def cond limsup} to the equation $y - H(x) = 0$ (assuming that subtraction is defined over the output space $\mathcal{Y}$).

Recently, we showed the usefulness of an extended theory of condition that is applicable to \emph{(nonlinear) underdetermined systems}, which we define as systems $F(x,y) = 0$ that have infinitely many solutions $y$ for any input $x$ \cite{Dewaele2024}. Systems with exactly one solution for each input are called \emph{identifiable}.
Even when one is interested in an identifiable problem $F(x,y) = 0$, underdetermined systems nevertheless arise when a subset of the constraints (i.e.\ equations) in $F$ is taken out of consideration to study the remaining constraints separately. We say that a system $R(x,y) = 0$ is a \emph{relaxation} of $F(x,y) = 0$ if it can be defined by taking a subset of the equations from $F$. In other words, $R = P \circ F$ where $P$ is any projection. Under some conditions, specified below, underdetermined systems have a condition number that is consistent with the standard definition \eqref{eq: def cond limsup}, even though they are classically considered ill-posed in numerical linear algebra.

The utility of this augmented theory of condition comes from its added explanatory value: we showed that, at any valid input-output pair $(x_0, y_0)$, any system is at least as ill-conditioned as any of its relaxations. Suppose that we have two systems defined by $F$ and $R$ where $R$ defines a relaxation of $F$. Then solving $F(x,y) = 0$ for a given $x$ requires solving \emph{at least} $R(x,y) = 0$ in the sense that at least one solution of $R(x,y) = 0$ is needed. If solving $R(x,y) = 0$ is ill-conditioned by \cite{Dewaele2024}, so must be solving $F(x,y) = 0$ and the cause of the ill-conditioning of $F$ must already manifest itself in the constraints defining the (simpler) problem $R(x,y) = 0$.

To illustrate the above, consider the following system of equations with input $x \in \mathbb{R}$ and output $(y_1, y_2) \in \mathbb{R}^2$:
\begin{equation}
    \label{eq: example system introduction}
\begin{cases}
    y_1^2 + y_2^2 = (x-1)^{-2} \\
    y_1 - y_2 = 0
\end{cases}
.\end{equation}
The first equation says that $(y_1,y_2)$ lies on a circle with radius $|x - 1|^{-1}$. Since this radius changes rapidly for $x$ close to $1$, it is clear that solving for $(y_1,y_2)$ is ill-conditioned near $x = 1$, even without considering the second equation.

\subsection{Contributions}

The objective of this paper is to present a similar analysis at the level of the solution variables rather than the defining equations. That is, we compare the problem of solving a system $F(x,y,z) = 0$ for $y$ given $x$ to that of solving the same system for both $y$ and $z$ combined. We will conclude analogously that solving for the pair $(y,z)$ is at least as ill-conditioned as solving for $y$. Hence, whatever makes solving for $y$ ill-conditioned explains the ill-conditioning of solving for $(y,z)$.
For instance, write the dependent variables of \eqref{eq: example system introduction} in polar coordinates as $(y_1,y_2) = (y \cos z, y \sin z)$.
Then solving \eqref{eq: example system introduction} for $(y,z)$ is ill-conditioned because the radius $y$ is sensitive to $x$, although the angle $z$ is not. 

This kind of analysis arises naturally in numerical linear algebra. For example, suppose that we wish to compute an orthonormal basis of the column space of some full-rank matrix $X \in \mathbb{R}^{m \times n}$ with $m > n$. One method is to take a singular value decomposition $X = U\Sigma V^T$ and keep the $U$ factor.
Although this is reliable in practice \cite{Golub2013}, it may seem like a bad idea since computing the singular vectors is ill-conditioned if any pair of singular values of $X$ is close together (and even \emph{ill-posed} if $X$ has reoccurring singular values) \cite{sunPerturbationAnalysisSingular1996,vannieuwenhoven2024condition}.
Our framework provides a new interpretation of why this does not cause any problems. For this problem, we do not need all three factors $(U,\Sigma,V)$, nor do we rely on the constraint that $\Sigma$ is diagonal. The condition number of solving for $U$ that takes this into account is independent of the singular value gaps. This assertion is formalised by \cref{prop: cond Tucker}, a more general statement about tensor decompositions.

The key contribution of this paper is a general framework formalising these examples, while covering the case where the solution $y$ is not necessarily unique given the input $x$. Therefore, the results can be combined with the aforementioned theory about relaxations.
The types of problems we will study are defined as follows.
\begin{definition}
    \label{def: CREP}
    Let $F\colon \mathcal X \times \mathcal Y \times \mathcal Z \to \mathcal W$ be a smooth map between smooth manifolds $\mathcal{X},\mathcal{Y},\mathcal{Z},$ and $\mathcal{W}$ and let $c$ be a constant in the image of $F$. Let $DF$ be the total derivative (Jacobian) of $F$. Suppose that 
    \begin{itemize}
        \item $\rank DF(x,y,z) = \rank \pdv{(y,z)}F(x,y,z) = r$ everywhere for some $r \in \mathbb{N}$,
        \item $\rank \pdv{z}F(x,y,z) = k$ everywhere for some $k \in \mathbb{N}.$
    \end{itemize}
    Then the equation $F(x,y,z) = c$ is a \emph{constant-rank elimination problem (CREP)}. The spaces $\mathcal X, \mathcal Y,$ and $\mathcal Z$ are the \emph{input}, \emph{output}, and \emph{latent} space, respectively. 
\end{definition}
This problem class can be interpreted as follows: the nullity of the partial derivative of $F$ with respect to some variable gives the number of degrees of freedom in that variable. Hence, the constant-rank assumptions above say that, given an input $x$ that may be perturbed in any direction within $\mathcal X$, there is always a fixed number of degrees of freedom in $(y,z)$ and in $z$, though not necessarily the same number for both. \Cref{prop: geometric characterisation of FCRE} expresses this more formally.
For example, if $\mathcal X$ is the set of real $m \times n$ matrices of rank $k$, $\mathcal Y = \mathbb{R}^{m \times k}$, and $\mathcal Z = \mathbb{R}^{k \times n}$, then the equation $F(X,Y,Z) := X - YZ = 0$ is a CREP. The number of degrees of freedom is $k^2$ for both $(Y,Z)$ and $Z$ since, if $(Y,Z)$ solves $X - YZ = 0$, then all possible solutions are exactly $\{(YG,G^{-1}Z) \,\vert\,G \in \mathbb{R}^{k \times k}, \det G \ne 0\}$.

The main theorem of this article makes it possible to assign a condition number to CREPs. The following statement is an abridged version of it.

\begin{theorem}
    \label{thm: main theorem informal}
    Let $F(x,y,z) = c$ be a CREP with $F\colon \mathcal X \times \mathcal Y \times \mathcal Z \to \mathcal W$ and let $(x_0, y_0, z_0)$ be a solution to the CREP. Assume that $\mathcal{Y}$ has a Riemannian metric with an induced distance function $d_{\mathcal Y}$. Then there exists a neighbourhood $\widehat{\mathcal Z} \subseteq \mathcal Z$ of $z_0$ so that the \emph{canonical solution map} 
    \begin{align*}
        H\colon \mathcal X &\to \mathcal Y \\
	    x &\mapsto \argmin_{\substack{y \text{ s.t. } \exists z \in \widehat{\mathcal Z}\colon\\ F(x,y,z) = c}} d_{\mathcal Y}(y_0, y)
    \end{align*}
    is well-defined and smooth around $x_0$.
\end{theorem}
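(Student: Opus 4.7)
The plan is two-staged: first I establish that the feasible output set $Y_x := \{y \in \mathcal Y : \exists z \in \widehat{\mathcal Z},\ F(x,y,z) = c\}$ is, for $\widehat{\mathcal Z}$ small and $x$ close to $x_0$, a smooth submanifold of $\mathcal Y$ depending smoothly on $x$; then I apply the implicit function theorem to the first-order optimality condition for the closest-point problem defining $H$.

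For the geometric stage, I set $V := F^{-1}(c)$. The constant rank theorem applied to $\operatorname{rank} DF \equiv r$ shows that $V$ is a smooth submanifold of $\mathcal X \times \mathcal Y \times \mathcal Z$ of codimension $r$ near $(x_0, y_0, z_0)$; moreover, the hypothesis $\operatorname{rank} DF = \operatorname{rank} \partial_{(y,z)} F$ forces $\operatorname{img} \partial_x F \subseteq \operatorname{img} \partial_{(y,z)} F$, so the projection $\pi_{\mathcal X}|_V$ is a submersion onto a neighbourhood of $x_0$. The map $\pi \colon V \to \mathcal X \times \mathcal Y$ that forgets $z$ has, at any $(x,y,z) \in V$, differential with kernel $\{(0,0,\dot z) : \partial_z F(x,y,z)\,\dot z = 0\}$, of constant dimension $\dim \mathcal Z - k$. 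Hence $\pi|_V$ has locally constant rank, and the constant rank theorem produces $M := \pi(V)$ as a local submanifold of $\mathcal X \times \mathcal Y$ of dimension $\dim \mathcal X + \dim \mathcal Y - (r-k)$. Since the submersion $\pi_{\mathcal X}|_V$ factors through $M$, the map $\rho \colon M \to \mathcal X$ is also a submersion, so its fibres $Y_x = \rho^{-1}(x)$ are smooth submanifolds of $\mathcal Y$ of codimension $r - k$, varying smoothly with $x$ and passing through $y_0$ when $x = x_0$.

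For the analytic stage, I pass to Riemannian normal coordinates $(u, v)$ on $\mathcal Y$ centred at $y_0$ with orthonormal basis adapted to the splitting $T_{y_0} Y_{x_0} \oplus (T_{y_0} Y_{x_0})^\perp$, so that $d_{\mathcal Y}(y_0, (u,v))^2 = \|u\|^2 + \|v\|^2$. Because $Y_{x_0}$ is tangent to the $u$-axis at $0$, it is locally a graph $v = \phi_0(u)$ with $\phi_0(0) = 0$ and $D\phi_0(0) = 0$; by the IFT applied to the defining equations of $M$, $Y_x$ is a smoothly varying graph $v = \phi(x, u)$ with $\phi(x_0, \cdot) = \phi_0$. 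The stationarity condition $\partial_u [\|u\|^2 + \|\phi(x,u)\|^2] = 2u + 2(\partial_u \phi)^T \phi = 0$ holds at $(x_0, 0)$, and its $u$-Jacobian there equals $2I$ (using $\phi_0(0) = 0$ and $D\phi_0(0) = 0$), hence is invertible. A second application of the IFT yields a unique smooth $u(x)$ near $x_0$ with $u(x_0) = 0$; strict convexity of the squared distance in $u$ near $0$ makes this the unique local minimiser, so $H(x) := (u(x), \phi(x, u(x)))$ is smooth.

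The principal obstacle I foresee is matching the local uniqueness delivered by the IFT with the \emph{global} $\argmin$ over the set in the theorem statement: $\widehat{\mathcal Z}$ must be shrunk sufficiently that $\{y : \exists z \in \widehat{\mathcal Z},\ F(x,y,z) = c\}$ remains inside the chart where the analysis is valid, yet large enough to be non-empty. The normal form of $\pi|_V$ produced by the constant rank theorem handles this: up to local diffeomorphism $\pi|_V$ is a linear projection, so shrinking $\widehat{\mathcal Z}$ to a neighbourhood compatible with that product structure removes spurious solutions without affecting the image $M$ near $(x_0, y_0)$.
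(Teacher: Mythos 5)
Your proof is correct and its first, geometric stage follows the same route as the paper: you show that $V = F^{-1}(c)$ is a manifold, that $\pi_{\mathcal X}|_V$ is a submersion (which is exactly the content of \cref{prop: geometric characterisation of FCRE}), and that $\pi|_V : V \to \mathcal X \times \mathcal Y$ has constant rank $\dim \mathcal X + \dim \mathcal Y - (r-k)$, so that its image $M$ is locally an embedded submanifold (the paper's \cref{lemma: image of cst rank map is manifold}). The divergence is in the second, analytic stage. The paper stops here and invokes the already-established FCRE result \cite[Theorem 1.1]{Dewaele2024} for the projected equation, whereas you re-derive that result from scratch: passing to Riemannian normal coordinates adapted to $T_{y_0} Y_{x_0} \oplus (T_{y_0} Y_{x_0})^\perp$, writing the fibres $Y_x$ as graphs $v = \phi(x,u)$, and applying the implicit function theorem to the first-order condition $2u + 2(\partial_u\phi)^\top \phi = 0$, whose $u$-Jacobian at $(x_0, 0)$ is $2I$ because $\phi$ and $\partial_u\phi$ both vanish there. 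This buys you a self-contained argument that does not depend on an external reference, at the price of redoing the closest-point analysis. The obstacle you flag at the end --- reconciling the local critical point with the $\argmin$ over $\{y : \exists z \in \widehat{\mathcal Z},\ F(x,y,z)=c\}$ --- is real and also present (and handled lightly) in the paper; it is resolved by choosing $\widehat{\mathcal P}$ as $\mathcal P$ intersected with a product neighbourhood $\widehat{\mathcal X} \times \widehat{\mathcal Y} \times \widehat{\mathcal Z}$ and shrinking $\widehat{\mathcal X}$ so that the minimal distance stays below $\operatorname{dist}(y_0, \partial \widehat{\mathcal Y})$, which forces the minimiser into the chart and makes your local minimiser the global one. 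With that clarification your argument is sound, so this is a genuinely alternative (more elementary, less modular) proof of the analytic half rather than a gap.
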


This result is proved in \cref{sec: geometry and proofs}.
\Cref{fig: CREP solution sets} is a visualisation of the theorem, for the specific case where $\pdv{F}{z}$ has full rank. If $\dim \mathcal Z > 1$ and $\pdv{F}{z}$ does not have full rank, the solution sets $\{(y,z) \,|\, F(x,y,z) = c\}$ have a higher dimension than their projections onto $\mathcal{Y}$.

\begin{figure}[t]
    \centering
    \includegraphics[width=\textwidth]{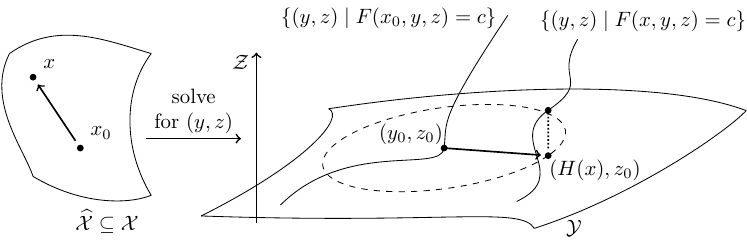}    
    \caption[Solution sets of a CREP and the canonical solution map $H$.]{Solution sets of a CREP and the canonical solution map $H$. Given an exact and perturbed input $x_0$ and $x$, the solution sets for both inputs are subsets of $\mathcal Y \times \mathcal Z$ (in this case, curves).
    For sufficiently small neighbourhoods $\widehat{\mathcal Z} \subseteq \mathcal Z$ of $z_0$, the solution map can be visualised as follows. For a time parameter $t \ge 0$, define the cylinder $C(t) := B(t) \times \widehat{\mathcal Z}$ where $B(t)$ is the closed disc around $y_0$ of radius $t$. Let $t$ increase from $0$ until $C(t)$ touches the solution set of $x$ at some point $(y,z)$. The unique $y$-coordinate of this point is $H(x)$ by definition.}
    \label{fig: CREP solution sets}
\end{figure}

\begin{definition}
    \label{def: cond CREP}
    Let $F(x,y,z) = c$ be a CREP with a particular solution $(x_0, y_0, z_0)$ and suppose that the input and output space have a Riemannian metric. Let $H$ and $\widehat{\mathcal Z}$ be defined by \cref{thm: main theorem informal}. Then the \emph{condition number of $y$ at $(x_0, y_0, z_0)$} is
    $$
    \kappa_{x \mapsto y}[F](x_0, y_0, z_0) := \limsup_{x \to x_0} \frac{d_{\mathcal Y}(y_0, H(x))}{d_{\mathcal X}(x_0, x)}
    =
    \limsup_{x \to x_0} \min_{\substack{F(x,y,z) = c\\z \in \widehat{\mathcal Z}}} \frac{d_{\mathcal Y}(y_0, y)}{d_{\mathcal X}(x_0, x)}
    ,$$
	where $d_{\mathcal X}$ and $d_{\mathcal Y}$ are the distances in $\mathcal X$ and $\mathcal Y$, respectively. The condition number of $z$, written as $\kappa_{x \mapsto z}$, is defined analogously by reversing the roles of $y$ and $z$. Finally, the condition number $\kappa_{x \mapsto (y,z)}$ is defined by applying the above to the equivalent CREP whose input, output, and latent space are $\mathcal X$, $\mathcal Y \times \mathcal Z$ (with the product metric), and the trivial set $\{0\}$, respectively. 
\end{definition}

A CREP whose latent space is $\mathcal Z$ is a singleton has effectively no latent variables. If this is the case and there is a unique solution $y$ for every input $x$, then by definition, $\kappa_{x \mapsto y}[F](x_0,y_0,z_0)$ is the condition number from \eqref{eq: def cond limsup} as it is conventionally understood. That is, \cref{def: cond CREP} reduces to \eqref{eq: def cond limsup} for identifiable problems.

For a perturbed input $x$, the condition number bounds the optimal forward error as
\begin{equation}
    \label{eq: 1st order error bound CREP}
\min_{
\substack{y \in \mathcal Y,\, z \in \widehat{\mathcal Z} \\ F(x,y,z) = c}
} d_{\mathcal Y}(y_0, y)
\le 
\kappa_{x \mapsto y}[F](x_0, y_0, z_0)\cdot d_{\mathcal X}(x_0, x) + o(d_{\mathcal X}(x_0, x))
\,\,\text{as}\,\, x \to x_0,
\end{equation}
where $\widehat{\mathcal Z}$ is some neighbourhood of $z_0$. The left-hand side can be interpreted as the optimal forward error $d_{\mathcal Y}(y_0, y)$ that can be attained with a solution $(y,z)$ close to $(y_0, z_0)$. It should be noted that closeness to $z_0$ is defined topologically rather than metrically and that a distance on $\mathcal{Z}$ is not required to define the condition number. 
That is, \cref{def: cond CREP} would give the same condition number for any metric on the manifold $\mathcal Z$.
Whilst this may seem unintuitive, this is ultimately because the condition number is a local (infinitesimal) property of the CREP and because measuring errors in the $z$-coordinate is irrelevant by assumption. 
Visually, we can picture the solution 
curve of $F(x,y,z) = c$ on \cref{fig: CREP solution sets} as merging into the solution curve of $F(x_0,y,z) = c$ as $x$ approaches $x_0$.
This explains why the condition number does not depend on the size of the neighbourhood $\widehat{\mathcal Z}$.
The distance we keep track of (i.e.\ the left-hand side of \eqref{eq: 1st order error bound CREP}) is measured only in the $y$-coordinate.

In \cref{sec: compute cond}, we will give several expressions that can be used to compute the condition number of a CREP. Loosely speaking, $\kappa_{x \mapsto y}[F](x_0,y_0,z_0)$ is the spectral norm of the linear map that takes any vector $\Delta x$ to $\Delta y$ such that $\norm{\Delta y}$ is minimised over all solutions $(\Delta y, \Delta z)$ of the linear system
$$
\pdv{F}{x}(x_0,y_0,z_0) \Delta x + \pdv{F}{y}(x_0,y_0,z_0) \Delta y + \pdv{F}{z}(x_0,y_0,z_0) \Delta z = 0,
$$
where the derivatives are evaluated at $(x_0,y_0,z_0)$. The rank assumptions in \cref{def: CREP} are made to ensure that this unique minimum-norm solution always exists.

The proposed condition number can measure which solution variables are the most sensitive to perturbations. The following statement expresses that solving for all dependent variables given the input is at least as ill-conditioned as solving for a subset of them. 

\begin{proposition}
    \label{prop: solving for some variables is easier}
    Let $F(x,y,z) = c$ be a CREP whose input, output, and latent space all have a Riemannian metric. At any solution $(x_0,y_0,z_0)$ to the CREP, we have
    \begin{equation}
        \label{eq: lower bound cond FCRE}
    \kappa_{x \mapsto y}[F](x_0,y_0,z_0) \le \kappa_{x \mapsto (y,z)}[F](x_0,y_0,z_0)
    .\end{equation}
\end{proposition}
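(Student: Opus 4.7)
The plan is to compare the two canonical solution maps guaranteed by \cref{thm: main theorem informal} and exploit the fact that the product Riemannian metric on $\mathcal{Y} \times \mathcal{Z}$ dominates each of its factor metrics. Intuitively, any candidate pair $(y,z)$ that is a good solution of the $(y,z)$-problem is, in particular, a candidate for the $y$-problem whose $y$-component is no farther from $y_0$ than the pair is from $(y_0,z_0)$.

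Concretely, let $H_y \colon \mathcal{X} \to \mathcal{Y}$ be the canonical solution map from \cref{def: cond CREP} applied to the original CREP, with its associated neighbourhood $\widehat{\mathcal Z}$ of $z_0$. Let $H_{(y,z)} \colon \mathcal{X} \to \mathcal{Y} \times \mathcal{Z}$ be the canonical solution map for the reformulated CREP whose output is $\mathcal{Y} \times \mathcal{Z}$ (equipped with the product metric) and whose latent space is trivial. Note that this reformulation is itself a CREP: the total-derivative rank condition is inherited from the original, and the rank of the (empty) partial with respect to the trivial latent variable is $0$, so \cref{thm: main theorem informal} applies and $H_{(y,z)}$ is well-defined and smooth around $x_0$. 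In particular, $H_{(y,z)}(x) \to (y_0, z_0)$ as $x \to x_0$.

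Write $H_{(y,z)}(x) = (\tilde y(x), \tilde z(x))$. By the continuity just mentioned, for all $x$ in a sufficiently small neighbourhood of $x_0$ we have $\tilde z(x) \in \widehat{\mathcal Z}$. For such $x$, the pair $(\tilde y(x), \tilde z(x))$ satisfies $F(x, \tilde y(x), \tilde z(x)) = c$ with $\tilde z(x) \in \widehat{\mathcal Z}$, so $\tilde y(x)$ is feasible in the minimisation problem defining $H_y(x)$. By the definition of $H_y$ as an $\argmin$ and by the elementary inequality $d_{\mathcal Y}(y_0, y) \le d_{\mathcal{Y} \times \mathcal{Z}}\bigl((y_0,z_0),(y,z)\bigr)$ for the product metric,
$$
d_{\mathcal Y}\bigl(y_0, H_y(x)\bigr) \;\le\; d_{\mathcal Y}\bigl(y_0, \tilde y(x)\bigr) \;\le\; d_{\mathcal{Y} \times \mathcal{Z}}\bigl((y_0,z_0), H_{(y,z)}(x)\bigr).
$$

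Dividing by $d_{\mathcal X}(x_0, x)$ and taking the $\limsup$ as $x \to x_0$ yields exactly \eqref{eq: lower bound cond FCRE} by \cref{def: cond CREP}. The only subtle point, and the ``hard part'' such as it is, is verifying that the reformulated problem genuinely satisfies the CREP hypotheses (so that \cref{thm: main theorem informal} is applicable and delivers a continuous $H_{(y,z)}$); once this is established, the containment $\tilde z(x) \in \widehat{\mathcal Z}$ for $x$ near $x_0$ is automatic and the remainder is the product-metric inequality above.
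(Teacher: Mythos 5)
Your proof is correct and follows essentially the same route as the paper's: both arguments introduce the auxiliary CREP with trivial latent space, observe that the product metric on $\mathcal{Y} \times \mathcal{Z}$ dominates the $\mathcal{Y}$-metric, and conclude by dividing by $d_{\mathcal X}(x_0,x)$ and taking the limit supremum. The one small difference is how the neighbourhood $\widehat{\mathcal Z}$ is dealt with: you make the containment $\tilde z(x) \in \widehat{\mathcal Z}$ explicit via continuity of $H_{(y,z)}$ and then compare to $H_y$ by feasibility of $\tilde y(x)$, whereas the paper simply equates the minimum of $d_{\mathcal Y}(y_0,y)$ over solutions with $z \in \mathcal Z$ to the minimum over solutions with $z \in \widehat{\mathcal Z}$; your version makes the locality of the argument slightly more transparent, but the underlying idea is identical.
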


This result, which is proven in \cref{sec: proofs}, can be used to determine which solution variables of a system of equations are the most sensitive to perturbations. If the ratio between right-hand side and the left-hand side of \eqref{eq: lower bound cond FCRE} is large, then $y$ is a dependent variable that is relatively insensitive to small changes in $x$ compared to the combination $(y,z)$.

It should be stressed that, since problems with an infinite number of solutions are allowed by \cref{def: CREP}, a CREP can be broken up in two ways: the first is to find out which (underdetermined) relaxations are ill-conditioned in order to identify the ill-conditioned constraints, and the second is to determine which solution variables are ill-conditioned by comparing the left and right-hand side of \eqref{eq: lower bound cond FCRE}. By combining both approaches, we can chisel down computational problems to the core variables and equations that make them ill-conditioned.

\subsection{Outline} 

The remainder of the paper is organised as follows. In \cref{sec: geometry and proofs}, we prove the main results, i.e.\ \cref{thm: main theorem informal} and \cref{prop: solving for some variables is easier}, by formulating CREPs as differential geometry problems. 
\Cref{sec: compute cond} shows how the condition number of a general CREP can be computed using numerical linear algebra.
\Cref{sec: Tucker} illustrates how the results above can be applied to an example problem, i.e.\ orthogonal Tucker decomposition of tensors. Concluding remarks are made in \cref{sec: conclusion}

\subsection{Notation}
For a point $p$ on a manifold $\mathcal M$, the tangent space is denoted as $\tspace_p \mathcal M$. Generic tangent vectors in this space are written as $\dot{p}$. The total derivative of a smooth map $F$ at $p$ is written as $DF(p)$. The derivative of $F$ in the direction $\dot{p}$ is $DF(p)[\dot p]$. Projections of a set $\mathcal M \times \mathcal N$ onto $\mathcal M$ and $\mathcal N$ are written as $\pi_{\mathcal M}$ and $\pi_{\mathcal N}$, respectively. $\mathcal X, \mathcal Y,$ and $\mathcal Z$ are smooth manifolds and $\mathcal X$ and $\mathcal Y$ are Riemannian. For any linear map $A$, $\nullity A$ is the nullity of $A$,  $\operatorname{span}A$ or $\operatorname{Im} A$ is the image, $\norm{A}$ is the operator norm (unless stated otherwise), and $\sigma_k(A)$ is the $k$th largest singular value. The set of $n \times n$ orthogonal matrices is $O(n)$.

\section{Geometric characterisation of the condition number}
\label{sec: geometry and proofs}

The goal of this section is to prove the two main theoretical results: \cref{thm: main theorem informal} and \cref{prop: solving for some variables is easier}. This will lead to an initial expression for the condition number, given by \cref{cor: norm solution map}.

The central idea in the proof is to leverage the theory of condition of \emph{feasible constant-rank equations (FCREs)} developed in \cite{Dewaele2024}. These can be defined as equations $\widetilde{F}(x,y) = c$ where adding a dummy variable $z := 0$ and defining
$F(x,y,z) := \widetilde{F}(x,y)$ gives a CREP $F(x,y,z) = c$.
In other words, FCREs are exactly CREPs without latent variables. The rank conditions in \cref{def: CREP} simplify to $\rank D\widetilde{F}(x,y) = \rank \pdv{y}\widetilde{F}(x,y) = r$ for FCREs.

The special case of \cref{thm: main theorem informal} for FCREs is \cite[Theorem 1.1]{Dewaele2024}, which we can summarise as follows. At any solution $(x_0, y_0)$ of the FCRE $\widetilde{F}(x,y) = c$, the derivative of $H(x) := \argmin_{\widetilde{F}(x,y) = c} d(y_0, y)$ at $x_0$ is
\begin{equation}
    \label{eq: deriv soln map FCRE}
DH(x_0) = - \left( \pdv{y} \widetilde{F}(x_0,y_0) \right)^\dagger \pdv{x}\widetilde{F}(x_0,y_0)
,\end{equation}
where $\cdot^\dagger$ is the Moore--Penrose inverse.
The spectral norm of $DH(x_0)$ is the condition number at $(x_0,y_0)$ in the sense of \cref{def: cond CREP}.

Our strategy for deriving the condition number of a CREP consists of two steps: first, in \cref{sec: geometric FCRE}, we find differential-geometric conditions that guarantee that projecting the graph of $F(x,y,z) = c$ onto the $x$ and $y$ variables gives an FCRE $\widetilde{F}(x,y) = 0$. Then, in \cref{sec: proofs}, we reformulate the expression for the condition number of the projected problem $\widetilde{F}(x,y) = 0$ in terms of the original CREP. 
The proof relies on the central concept in differential geometry of \emph{maps of constant rank}. A map $G\colon\mathcal M \to \mathcal N$ between smooth manifolds has constant rank $r$ if $\rank DG(p) = r$ for all $p \in \mathcal M$. Every level set of such a map $G$ is a smooth manifold whose tangent space is $\ker DG$ \cite[Theorem 5.12]{Lee2013}.

\subsection{Projections of CREPs}
\label{sec: geometric FCRE}

The following statement gives a way of recognising geometrically that a problem without latent variables satisfies the constant rank assumptions in \cref{def: CREP}. It also gives equivalent interpretations of these assumptions.

\begin{proposition}
    \label{prop: geometric characterisation of FCRE}

    Let $F: \mathcal X \times \mathcal Y \to \mathcal W$ be a smooth map of constant rank and let $\mathcal P$ be any non-empty level set of $F$.
    Then the following three statements are equivalent:
    \begin{enumerate}
        \item $
    \rank DF(x,y) = \rank \pdv{y}F(x,y)
    $ for all $(x,y)$,
        \item $\dim \mathcal P = \dim \mathcal X + \nullity \pdv{y}F(x,y)$ for all $(x,y)$,
        \item \label{itm: geometric definition of FCRE} the projection $\pi_{\mathcal X}: \mathcal P \to \mathcal X$ is a smooth submersion, i.e.\ $D\pi_{\mathcal X}$ is surjective.
    \end{enumerate}
\end{proposition}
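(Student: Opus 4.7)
The plan is to prove (1) $\Leftrightarrow$ (2) $\Leftrightarrow$ (3) by a pointwise dimension count whose only structural input is the Constant Rank Level Set Theorem \cite[Theorem 5.12]{Lee2013}. Applying that theorem to $F$ (which has constant rank $r$ by hypothesis) yields that $\mathcal{P}$ is a smooth embedded submanifold of $\mathcal{X} \times \mathcal{Y}$ with
\[ \dim \mathcal{P} = \dim \mathcal{X} + \dim \mathcal{Y} - r \quad\text{and}\quad \tspace_{(x,y)}\mathcal{P} = \ker DF(x,y). \]
These two facts drive the rest of the proof.

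For (1) $\Leftrightarrow$ (2), I would invoke the rank--nullity theorem for $\pdv{y}F(x,y)$, obtaining $\nullity \pdv{y}F(x,y) = \dim \mathcal{Y} - \rank \pdv{y}F(x,y)$. Substituting this into the claimed equality in (2) and using the dimension formula above collapses (2) to $\rank \pdv{y}F(x,y) = r$; since $r = \rank DF(x,y)$ by the constant-rank hypothesis, this is exactly (1).

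For (2) $\Leftrightarrow$ (3), I would restrict $D\pi_{\mathcal{X}}$ to $\tspace_{(x,y)}\mathcal{P} = \ker DF(x,y)$. The restriction sends $(\dot{x},\dot{y}) \mapsto \dot{x}$, and its kernel consists of vectors of the form $(0,\dot{y})$ with $\pdv{y}F(x,y)[\dot{y}] = 0$; this kernel therefore has dimension $\nullity \pdv{y}F(x,y)$. Rank--nullity then gives
\[ \rank\bigl(D\pi_{\mathcal{X}}\big|_{\tspace_{(x,y)}\mathcal{P}}\bigr) = \dim \mathcal{P} - \nullity \pdv{y}F(x,y), \]
and $D\pi_{\mathcal{X}}$ being surjective at $(x,y) \in \mathcal{P}$ is exactly the assertion that this rank equals $\dim \mathcal{X}$, which rearranges to (2).

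I do not foresee any deep obstacle; the argument is essentially linear algebra packaged around the Constant Rank Theorem. The only mild piece of bookkeeping is that (1) and (2) are stated pointwise on $\mathcal{X} \times \mathcal{Y}$ whereas (3) concerns only $\mathcal{P}$, so the equivalence of (3) with the others has to be read as equivalence with the restrictions of (1) and (2) to points of $\mathcal{P}$ -- which is the version that matters, since the subsequent condition-number analysis takes place on $\mathcal{P}$ anyway.
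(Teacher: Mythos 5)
Your proof is correct, and your handling of $(2)\Leftrightarrow(3)$ is genuinely different from --- and tidier than --- the paper's argument. The paper proves the cycle $(1)\Leftrightarrow(2)$, then $(1)\Rightarrow(3)$ by constructing an explicit right inverse of $\pdv{y}F$, and then $(3)\Rightarrow(2)$ by a somewhat lengthy argument that builds an explicit subspace $W \subseteq \tspace_{(x,y)}\mathcal{P}$, counts its dimension, and shows the reverse inclusion. You instead apply the rank--nullity theorem once to the restriction $D\pi_{\mathcal X}\big|_{\tspace_{(x,y)}\mathcal P}$, observing that its kernel is $\{0\}\times\ker\pdv{y}F(x,y)$, so $\rank\bigl(D\pi_{\mathcal X}\big|_{\tspace_{(x,y)}\mathcal P}\bigr) = \dim\mathcal{P} - \nullity\pdv{y}F(x,y)$, and then note that surjectivity is exactly the statement that this rank equals $\dim\mathcal{X}$. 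That collapses the paper's two implications $(1)\Rightarrow(3)$ and $(3)\Rightarrow(2)$ into a single two-sided equivalence, sidestepping the construction of a right inverse and of $W$. The paper's approach has the minor advantage that its explicit right inverse foreshadows the Moore--Penrose machinery used later (e.g.\ in \eqref{eq: deriv soln map FCRE}), but as a proof of this proposition alone yours is shorter and equally rigorous. Your final remark about reading the pointwise conditions ``on $\mathcal{P}$'' is well taken; the paper's own proof also implicitly restricts to $(x,y)\in\mathcal{P}$, since $\tspace_{(x,y)}\mathcal{P} = \ker DF(x,y)$ is only meaningful there.
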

\begin{proof}
    We will show the following implications: $(1) \Leftrightarrow (2)$, $(1) \Rightarrow (3)$, and $(3) \Rightarrow (2)$. 
    For ease of notation, all derivatives in this proof are implicitly evaluated at an arbitrary point $(x,y)$.

    For $(1) \Leftrightarrow (2)$, we use the fact that $\mathcal P$ is a smooth manifold of dimension $\nullity DF$ \cite[Theorem 5.12]{Lee2013}. Applying the rank-nullity theorem gives:
    \begin{align*}
    \dim \mathcal{P} &= \nullity DF = \dim \mathcal X + \dim \mathcal Y - \rank DF\\
    &=\dim \mathcal X + \nullity \pdv{F}{y} + \rank \pdv{F}{y} - \rank DF
    ,\end{align*}
    from which it follows that $(1) \Leftrightarrow (2)$.

    To show $(1) \Rightarrow (3)$, we write the tangent space to $\mathcal P$ as
    $$
    \tspace_{(x,y)}\mathcal P = \ker DF(x,y) = \left\{(\dot{x}, \dot{y}) \,\middle\vert\, \pdv{F(x,y)}{x} \dot x + \pdv{F(x,y)}{x}\dot y = 0 \right\}
    .$$
    Since $\rank DF = \rank \pdv{F}{y}$ and $\img \pdv{F}{y} \subseteq \img DF$, it follows that $V := \img \pdv{F}{y} = \img DF$. Pick any right inverse $\left(\pdv{F}{y}\right)^{RI}: V \to \tspace_y\mathcal Y$ of $\pdv{F}{y}$, i.e.\ $\pdv{F}{y} \circ \left(\pdv{F}{y}\right)^{RI} = \mathrm{Id}_V$. For any $\dot{x} \in \tspace_x \mathcal X$, the vector $\dot{y} = - \left(\pdv{F}{y}\right)^{RI} \pdv{F}{x} \dot{x}$ is well-defined because $\pdv{F}{x} \dot{x} \in V$. It can be verified that $(\dot{x}, \dot{y}) \in \tspace_{(x,y)}\mathcal P$. Hence, $D\pi_{\mathcal X}$ is surjective.

    Finally, we prove $(3) \Rightarrow (2)$. Since $D\pi_{\mathcal X}$ is surjective, it has a right inverse $D\pi_{\mathcal X}^{RI}: \tspace_x \mathcal X \to \tspace_{(x,y)}\mathcal P$. That is, for any $\dot{x} \in \tspace_x \mathcal X$, the vector $D\pi_{\mathcal X}^{RI}(\dot{x})$ is a tuple $(\dot{x}, \dot{y}) \in \ker DF$ for some $\dot{y} \in \tspace_y \mathcal Y$. Hence, $\tspace_{(x,y)}\mathcal P = \ker DF$ contains 
    at least the set 
    $$
    W := \left\{ (\dot x, D\pi_{\mathcal Y}\left(D\pi_{\mathcal X}^{RI} \dot x\right) + v) \,\middle\vert\, \dot{x} \in \tspace_x \mathcal X, v \in \ker \pdv{F}{y}\right\}
    .$$
	It is straightforward to check that $W$ is the image of the injective linear map that takes $(\dot{x}, v) \in \tspace_x \mathcal X \times \ker \pdv{F}{y}$ to $(\dot x, D\pi_{\mathcal Y}\left(D\pi_{\mathcal X}^{RI} \dot x\right) + v)$. Hence, $W$ is a $(\dim \mathcal X + \nullity \pdv{F}{y})$-dimensional linear subspace of $\tspace_{(x,y)}\mathcal P$.
    
    We complete the proof by showing that $\tspace_{(x,y)}\mathcal P \subseteq W$. Pick any $(\dot{x}, \dot{y}) \in \tspace_{(x,y)}\mathcal P$. We have both 
    $$
    \pdv{F}{x}\dot{x} + \pdv{F}{y}\dot{y} = 0
    \quad\text{and}\quad
    \pdv{F}{x}\dot{x} + \pdv{F}{y}\left(D\pi_{\mathcal Y}\left(D\pi_{\mathcal X}^{RI} \dot{x}\right)\right) = 0
    .$$
    By subtracting the latter equation from the former, we see that $\dot{y} - D\pi_{\mathcal Y}\left(D\pi_{\mathcal X}^{RI} \dot{x}\right) \in \ker \pdv{F}{y}$. Thus, $(\dot{x}, \dot{y}) \in W$, and therefore $\tspace_{(x,y)} \mathcal P = W$.
\end{proof}

\begin{remark}
    \label{rmk: FCRE is geometric}
The assumption in \cref{prop: geometric characterisation of FCRE} that $\mathcal P$ is the level set of a map of constant rank is satisfied by any sufficiently small submanifold of $\mathcal X \times \mathcal Y$. More precisely, suppose that $\mathcal P$ is any embedded submanifold of $\mathcal X \times \mathcal Y$. By the local slice criterion \cite[Theorem 5.8]{Lee2013}, every point on $\mathcal P$ has a neighbourhood $\mathcal U$, which is open in $\mathcal X \times \mathcal Y$, such that $\mathcal U \cap \mathcal P$ is the zero set of a map $F: \mathcal U \to \mathbb{R}^{\dim(\mathcal X \times \mathcal Y)}$ of constant rank. Thus, \cref{prop: geometric characterisation of FCRE} can be applied to $\mathcal P \cap \mathcal U$.

The utility of \cref{prop: geometric characterisation of FCRE} is that FCRE problems can be defined either purely in terms of equations (point 1) or purely geometrically (point 3). Both approaches define the same class of problems. 
\end{remark}

An essential part of the proof of \cref{thm: main theorem informal} is that projecting the CREP onto $\mathcal X \times \mathcal Y$ does not introduce singularities if we look at the problem locally.
The argument for this will rely on the following lemma. 
\begin{lemma}
    \label{lemma: image of cst rank map is manifold}
    Let $F: \mathcal M \to \mathcal N$ be a smooth map of constant rank $r$. Then every $p \in \mathcal M$ has a neighbourhood $\mathcal U \subseteq \mathcal M$ such that $F(\mathcal U)$ is an $r$-dimensional embedded submanifold of $\mathcal N$.
\end{lemma}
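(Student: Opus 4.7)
The plan is to invoke the constant rank theorem (also called the rank theorem) from differential geometry, which is the standard tool for analysing smooth maps whose differential has constant rank. Since the hypothesis of the lemma matches the hypothesis of that theorem exactly, almost everything we need is handed to us on a plate.

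More concretely, I would proceed as follows. First, I would apply the constant rank theorem (see, e.g., \cite[Theorem 4.12]{Lee2013}) at the point $p \in \mathcal M$. This produces smooth coordinate charts $(\mathcal U, \varphi)$ centred at $p$ and $(\mathcal V, \psi)$ centred at $F(p)$, with $F(\mathcal U) \subseteq \mathcal V$, such that the coordinate representation $\psi \circ F \circ \varphi^{-1}$ takes the form
$$
(x^1, \dots, x^r, x^{r+1}, \dots, x^m) \longmapsto (x^1, \dots, x^r, 0, \dots, 0)
$$
on $\varphi(\mathcal U) \subseteq \mathbb R^m$. Second, after possibly shrinking $\mathcal U$ so that $\varphi(\mathcal U)$ is a product of open intervals (a coordinate cube), the image $\psi(F(\mathcal U))$ is exactly the coordinate slice $\{(x^1, \dots, x^r, 0, \dots, 0) : (x^1, \dots, x^r) \in \Omega\}$ inside $\psi(\mathcal V)$, where $\Omega$ is an open subset of $\mathbb R^r$. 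Such a slice is, by the local slice criterion \cite[Theorem 5.8]{Lee2013}, an embedded $r$-dimensional submanifold of $\mathcal V$, and hence of $\mathcal N$.

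The step I expect to require the most care is the shrinking of $\mathcal U$: the rank theorem as usually stated produces charts on which the \emph{map} has the prescribed coordinate form, but to conclude that the image $F(\mathcal U)$ is a genuine slice (rather than merely contained in one) one must ensure that the last $m - r$ coordinates of points in $\varphi(\mathcal U)$ range over an open set, which is automatic once $\varphi(\mathcal U)$ is taken to be a product open set. No other subtlety arises because embeddedness of $F(\mathcal U)$ is local and is furnished directly by the chart $(\mathcal V, \psi)$: it serves simultaneously as a slice chart for $F(\mathcal U)$ in $\mathcal N$. This finishes the proof with essentially no additional computation.
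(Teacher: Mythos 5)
Your argument is correct in outline and takes a slightly more direct route than the paper's. The paper also begins with the rank theorem \cite[Theorem 4.12]{Lee2013}, but then factors $F$ through the domain slice $\pi_r(\mathcal U')$ (the set where the last $m-r$ coordinates vanish), observes that $F$ restricted to that slice is an immersion and hence a local embedding, and invokes \cite[Proposition 5.2]{Lee2013} (image of a smooth embedding is an embedded submanifold). You instead read off the image directly in the codomain chart as a coordinate slice and apply the local slice criterion. Your route avoids the detour through an intermediate submanifold of $\mathcal M$ and the embedding proposition, which is a small but genuine simplification.

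One technical point you should tighten. After shrinking $\mathcal U$ so that $\varphi(\mathcal U) = \Omega_1 \times \Omega_2$ with $\Omega_1 \subseteq \mathbb R^r$ and $\Omega_2 \subseteq \mathbb R^{m-r}$ open, you obtain $\psi(F(\mathcal U)) = \Omega_1 \times \{0\}$. This need \emph{not} be a full $r$-slice of $\psi(\mathcal V)$: the set $\psi(\mathcal V) \cap (\mathbb R^r \times \{0\})$ may be strictly larger than $\Omega_1 \times \{0\}$, so $(\mathcal V, \psi)$ is not automatically a slice chart for $F(\mathcal U)$ as you claim in your final sentence. Two standard fixes: either shrink $\mathcal V$ to $\mathcal V' := \psi^{-1}(\Omega_1 \times B)$ for a small ball $B$ around $0$ in $\mathbb R^{n-r}$, so that $\psi(\mathcal V') \cap (\mathbb R^r \times \{0\}) = \Omega_1 \times \{0\}$ exactly; or note that $\Omega_1 \times \{0\}$ is open in the full slice, and an open subset of an embedded submanifold is again an embedded submanifold. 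Either finishes the proof.
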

\begin{proof}
    By \cite[Theorem 4.12]{Lee2013}, there exist charts at $p$ and $F(p)$ such that $F$ is defined in coordinates by $(x^1,\dots,x^{\dim \mathcal M}) \mapsto (x^1,\dots,x^r,0,\dots,0)$ on some neighbourhood $\mathcal{U}'$ of $p$. Let $\pi_r: \mathcal U' \to \mathcal U'$ be the map that sets all but the first $r$ coordinates to zero, so that, locally, $F = F \circ \pi_r$.
    By \cite[Theorem 5.8]{Lee2013}, $\pi_r(\mathcal U')$ is an embedded submanifold of $\mathcal M$. Since $F|_{\pi_{r}(\mathcal U')}$ is an immersion, $F|_{\pi_r(\mathcal U')}$ is a local smooth embedding. The result follows from \cite[Proposition 5.2]{Lee2013}.
\end{proof}

\subsection{Proofs of the main results}
\label{sec: proofs}

Now we can prove \cref{thm: main theorem informal}. In fact, we will prove a stronger statement that gives an expression for the derivative of the map $H$ as well. This is the main theoretical result of the paper.

\begin{theorem}
    \label{thm: deriv soln map CREP}
    Let $F(x,y,z) = c$ be a CREP with $F\colon \mathcal X \times \mathcal Y \times \mathcal Z \to \mathcal W$ and let $(x_0, y_0, z_0)$ be a solution to the CREP. Assume that $\mathcal{Y}$ has a Riemannian metric with an induced distance function $d_{\mathcal Y}$. Then there exist neighbourhoods $\widehat{\mathcal X}$ and $\widehat{\mathcal Z}$ of $x_0$ and $z_0$, respectively, so that the map 
    \begin{align*}
        H\colon \widehat{\mathcal X} &\to \mathcal Y \\
	    x &\mapsto \argmin_{\substack{y \text{ s.t. } \exists z \in \widehat{\mathcal{Z}}\colon\\ F(x,y,z) = c}} d_{\mathcal Y}(y_0, y)
    \end{align*}
    is well-defined and smooth. The differential of $H$ at $x_0$ is the unique linear map $DH(x_0)$ that satisfies the linear system
    \begin{equation}
        \label{eq: formula DH abstract}
        \begin{cases}
            (\dot{x}, DH(x_0)[\dot{x}]) \in D\pi_{\mathcal X \times \mathcal Y}\left[\ker DF\right] \quad\text{for all}\quad \dot{x} \in \tspace_{x_0}\mathcal X\\
            \operatorname{span} DH(x_0) \perp D\pi_{\mathcal Y}\left[\ker \pdv{F}{(y,z)}\right]
        \end{cases}
    \end{equation}
    in which all derivatives are evaluated at $(x_0,y_0,z_0)$ or its projections.
\end{theorem}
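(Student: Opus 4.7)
The strategy is to reduce this CREP statement to the FCRE case already handled in \cite[Theorem~1.1]{Dewaele2024}. Specifically, I plan to project the solution set $\mathcal S := F^{-1}(c)$ onto $\mathcal X \times \mathcal Y$ so that, on a suitable neighbourhood, the image is an embedded submanifold $\mathcal P$ that is the graph of an FCRE. The canonical solution map of this FCRE will agree with $H$, and the FCRE derivative formula \eqref{eq: deriv soln map FCRE} will translate directly into \eqref{eq: formula DH abstract}.

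First, I would invoke the constant-rank theorem (using the first CREP hypothesis $\rank DF = r$) to see that $\mathcal S$ is a smooth manifold near $(x_0,y_0,z_0)$. Next, I would study the restriction $\pi := \pi_{\mathcal X \times \mathcal Y}|_{\mathcal S}$, whose kernel at any point is $\{(0,0,\dot z) : \pdv{F}{z}\dot z = 0\}$; by the second CREP hypothesis, this has constant dimension $\dim \mathcal Z - k$, so $\pi$ has constant rank. By \cref{lemma: image of cst rank map is manifold}, I can choose a product neighbourhood $\mathcal U = \widehat{\mathcal X} \times \widehat{\mathcal Y} \times \widehat{\mathcal Z}$ of $(x_0,y_0,z_0)$ such that $\mathcal P := \pi(\mathcal S \cap \mathcal U)$ is an embedded submanifold of $\mathcal X \times \mathcal Y$. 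By \cref{rmk: FCRE is geometric}, $\mathcal P$ is locally cut out by a constant-rank map $\widetilde F$. To confirm that $\widetilde F(x,y) = 0$ is genuinely an FCRE, I would verify criterion (3) of \cref{prop: geometric characterisation of FCRE}: pick any $\dot x \in \tspace_{x_0}\mathcal X$; the first CREP hypothesis gives $\img \pdv{F}{x} \subseteq \img \pdv{F}{(y,z)}$, so there exist $(\dot y, \dot z)$ with $(\dot x, \dot y, \dot z) \in \ker DF$, whence $(\dot x, \dot y) \in D\pi_{\mathcal X \times \mathcal Y}[\ker DF] = \tspace_{(x_0,y_0)}\mathcal P$, exhibiting $\dot x$ in the image of $D\pi_{\mathcal X}|_{\mathcal P}$.

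With $\widetilde F$ in hand, I would apply \cite[Theorem~1.1]{Dewaele2024} (possibly after shrinking $\widehat{\mathcal X}$) to obtain a smooth map $\tilde H\colon \widehat{\mathcal X} \to \mathcal Y$ defined as $\tilde H(x) = \argmin\{d_{\mathcal Y}(y_0, y) : (x,y) \in \mathcal P\}$, and conclude $H = \tilde H$ from the construction of $\mathcal P$. From \eqref{eq: deriv soln map FCRE}, $D\tilde H(x_0)$ is the unique linear map whose graph lies in $\tspace_{(x_0,y_0)}\mathcal P$ and whose image is orthogonal to the \emph{vertical tangent space} $V := \{\dot y : (0, \dot y) \in \tspace_{(x_0,y_0)}\mathcal P\}$. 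A short tangent-space calculation shows $\tspace_{(x_0,y_0)}\mathcal P = D\pi_{\mathcal X \times \mathcal Y}[\ker DF]$ and $V = D\pi_{\mathcal Y}[\ker \pdv{F}{(y,z)}]$, turning these two characterisations into the two lines of \eqref{eq: formula DH abstract}. Uniqueness of the resulting linear map is then immediate: the first condition fixes $DH(x_0)[\dot x]$ modulo $V$, and the orthogonality condition picks the unique representative in $V^\perp$.

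The main obstacle I anticipate is the bookkeeping needed to ensure $H = \tilde H$ on a common neighbourhood. The $\argmin$ in the definition of $H$ ranges over all $y \in \mathcal Y$ admitting some $z \in \widehat{\mathcal Z}$, whereas the set $\{y : (x,y) \in \mathcal P\}$ only reaches $y \in \widehat{\mathcal Y}$; one must shrink $\widehat{\mathcal X}$ further, using continuity of the $\argmin$ supplied by the FCRE theory applied to $\widetilde F$, to guarantee that any minimiser stays inside $\widehat{\mathcal Y}$ and is unique. Once this technicality is settled, smoothness and the derivative formula follow directly from the FCRE version via the tangent-space identities above.
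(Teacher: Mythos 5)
Your proposal is correct and follows essentially the same route as the paper: project $F^{-1}(c)$ onto $\mathcal X \times \mathcal Y$, use the constant-rank hypotheses to show the projection has constant rank and that the image is locally an embedded submanifold satisfying the geometric FCRE criterion, apply \cite[Theorem~1.1]{Dewaele2024} to the resulting $\widetilde F$, and translate the kernel and orthogonality conditions back through $D\pi_{\mathcal X \times \mathcal Y}$ and $D\pi_{\mathcal Y}$. The only stylistic difference is that you verify the submersion criterion directly from $\img \pdv{F}{x} \subseteq \img \pdv{F}{(y,z)}$, whereas the paper derives it by factoring the already-known submersion $\pi_{\mathcal X}\colon F^{-1}(c) \to \mathcal X$ through $\pi_{\mathcal X \times \mathcal Y}$; both are valid, and your explicit flagging of the neighbourhood-shrinking needed to identify $H$ with $\tilde H$ is a legitimate technical point the paper handles only implicitly.
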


The main ideas of the proof are that the projection of the CREP onto $\mathcal X \times \mathcal Y$ satisfies the geometric definition of FCREs (i.e.\ item \ref{itm: geometric definition of FCRE} of \cref{prop: geometric characterisation of FCRE}) and that the theorem is known for FCREs \cite{Dewaele2024}.

\begin{proof}[Proof of \cref{thm: deriv soln map CREP}]
By assumption, the equation $F(x,(y,z)) = c$ defines an FCRE with input $x$ and output $(y,z)$. Define the smooth manifold $\mathcal P := F^{-1}(c)$ and define the projection
$
\pi_{\mathcal X \times \mathcal Y}: \mathcal P \to \mathcal X \times \mathcal Y
.$ At any point $(x,y,z) \in \mathcal P$, it holds that $\tspace_{(x,y,z)}\mathcal P = \ker DF(x,y,z)$. Thus,
\begin{equation}
    \label{eq: ker piXY}
\ker D\pi_{\mathcal X \times \mathcal Y}(x,y,z) = \left\{ (0, 0, \dot z) \in \ker DF(x,y,z) \right\} = \{(0,0)\} \times \ker \pdv{z}F(x,y,z)
,\end{equation}
where the last equality follows from the fact that $DF$ is the sum of all partial derivatives of $F$.
By assumption, the dimension of the right-hand side of \eqref{eq: ker piXY} is independent of $(x,y,z)$. Thus, $\pi_{\mathcal X \times \mathcal Y}$ has constant rank, so that, by \cref{lemma: image of cst rank map is manifold}, the point $(x_0,y_0,z_0)$ has a neighbourhood $\widehat{\mathcal P} \subseteq \mathcal P$ whose projection onto $\mathcal X \times \mathcal Y$ is an embedded submanifold of $\mathcal X \times \mathcal Y$.

Since $F(x,(y,z)) = c$ defines an FCRE, it follows from \cref{prop: geometric characterisation of FCRE} that $\pi_{\mathcal X}: \mathcal P \to \mathcal X$ is a smooth submersion. In other words, the projection of $\tspace_{(x,y,z)} \mathcal P$ onto $\tspace_{x}\mathcal X$ is surjective. Hence, the projection of $\tspace_{(x,y)}\pi_{\mathcal{X} \times \mathcal Y}(\widehat{\mathcal P}) = D\pi_{\mathcal X \times \mathcal Y}[\tspace_{(x,y,z)}\widehat{\mathcal P}]$ is surjective as well. By \cref{prop: geometric characterisation of FCRE,rmk: FCRE is geometric}, $\pi_{\mathcal X \times \mathcal Y}(\widehat{\mathcal P})$ is locally defined by the FCRE $\widetilde{F}(x,y) = 0$ for some unspecified map $\widetilde{F}: \mathcal X \times \mathcal Y \to \mathbb{R}^{\dim(\mathcal X \times \mathcal Y)}$. Since \cref{thm: main theorem informal} is known for FRCEs \cite{Dewaele2024}, the following map is well-defined and smooth within some neighbourhood $\widehat{\mathcal X}$ of $x_0$:
\begin{align*}
    H\colon \widehat{\mathcal X} &\to \mathcal Y \\
    x &\mapsto \argmin_{y\colon \widetilde{F}(x,y) = 0} d_{\mathcal Y}(y_0, y)
    = \argmin_{y\colon (x,y) \in \pi_{\mathcal X \times \mathcal Y}(\widehat{\mathcal P})} d_{\mathcal Y}(y_0, y)
    =
	\argmin_{\substack{y \text{ s.t. } \exists z \in \widehat{\mathcal Z}\colon\\ F(x,y,z) = c}} d_{\mathcal Y}(y_0, y)
,\end{align*}
where $\widehat{\mathcal Z}$ is the projection of $\widehat{\mathcal P}$ onto $\mathcal Z$.
This proves the first assertion.

The derivative of $H$ is given by \eqref{eq: deriv soln map FCRE},
which we will reformulate in terms of the derivatives of $F$. In the following, all derivatives are implicitly evaluated at $(x_0,y_0,z_0)$ or its projections. By the definition of the Moore--Penrose inverse, (i.e.\ $A^\dagger := A|_{(\ker A)^\perp}^{-1}$ for any $A$), \eqref{eq: deriv soln map FCRE} is equivalent to the system
\begin{equation}
    \label{eq: DH system in terms of Ftilde}
    \begin{cases}
        \pdv{\widetilde F}{x} + \pdv{\widetilde F}{y}DH = 0 \\
        \operatorname{span} DH \perp \ker \pdv{\widetilde F}{y}
    \end{cases}
.\end{equation}

The first line of \eqref{eq: DH system in terms of Ftilde} says that $(\dot{x}, DH[\dot{x}]) \in \ker D\widetilde{F}$ for all $\dot{x} \in \tspace_{x_0}\mathcal X$. Since $\pi_{\mathcal X \times \mathcal Y}(\widehat{\mathcal P})$ is a level set of $\widetilde{F}$ and, likewise, $\widehat{\mathcal P}$ is locally a level set of $F$, we have 
$$
\ker D\widetilde{F} = \tspace_{(x_0,y_0)}\pi_{\mathcal X \times \mathcal Y}(\widehat{\mathcal P}) = D\pi_{\mathcal X \times \mathcal Y} \left[\tspace_{(x_0,y_0,z_0)}\widehat{\mathcal P}\right] 
=
D\pi_{\mathcal X \times \mathcal Y} \left[\ker DF\right] 
.$$
Thus, the first lines of \eqref{eq: formula DH abstract} and \eqref{eq: DH system in terms of Ftilde} are equivalent.

For the second condition, we have 
\begin{align*}
    \ker \pdv{\widetilde F}{y} &= \left\{\dot{y} \in \tspace_{y_0} \mathcal Y \,\middle\vert\, (0,\dot{y}) \in \ker D\widetilde{F} \right\} \\
    &=
    \left\{ \dot{y} \in \tspace_{y_0} \mathcal Y \,\middle\vert\, \exists \dot z \in \tspace_{z_0} \mathcal Z \colon (0,\dot{y},\dot{z}) \in \ker DF \right\} \\
    &=
    D\pi_{\mathcal Y}\left[ \ker \pdv{F}{(y,z)} \right]
,\end{align*}
which shows that the second condition in \eqref{eq: formula DH abstract} is equivalent to that in \eqref{eq: DH system in terms of Ftilde}.
\end{proof}

\begin{corollary}
    \label{cor: norm solution map}
    Let $F(x,y,z) = c$ be a CREP with a particular solution $(x_0,y_0,z_0)$ and solution map $H$ as in \cref{thm: deriv soln map CREP}. Then
    $$
    \kappa_{x \mapsto y}[F](x_0,y_0,z_0) = \norm{DH(x_0)}
    $$
    where $\norm{\cdot}$ is the operator norm induced by the metric in $\mathcal X$ and $\mathcal Y$.
\end{corollary}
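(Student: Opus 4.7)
The plan is to reduce the statement to the well-known fact that, for a smooth map between Riemannian manifolds, the Rice-style condition number \eqref{eq: def cond limsup} equals the operator norm of the derivative. By \cref{thm: deriv soln map CREP}, the map $H$ is well-defined and smooth on a neighbourhood $\widehat{\mathcal{X}}$ of $x_0$, so the argument will be essentially a first-order Taylor expansion together with the compatibility between the Riemannian distance and the tangent-space norm near the base point.

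The first step is to observe that $H(x_0) = y_0$. Indeed, $y_0$ is in the feasible set $\{y : \exists z \in \widehat{\mathcal Z},\, F(x_0,y,z)=c\}$ since $z_0 \in \widehat{\mathcal Z}$, and $y_0$ trivially minimises $d_{\mathcal Y}(y_0,\cdot)$ over that set. The second step is to parametrise perturbations of the input using the Riemannian exponential map: for $x$ close to $x_0$, write $x = \exp_{x_0}(v)$ for a unique $v \in \tspace_{x_0}\mathcal X$, so that $d_{\mathcal X}(x_0,x) = \|v\|$. Smoothness of $H$ together with $H(x_0)=y_0$ then yields, after composing with $\exp_{y_0}^{-1}$, the standard expansion
\begin{equation*}
    \exp_{y_0}^{-1}\bigl(H(\exp_{x_0}(v))\bigr) = DH(x_0)[v] + o(\|v\|),
\end{equation*}
and the fact that the Riemannian distance agrees with the tangent-space norm to first order (i.e.\ $d_{\mathcal Y}(y_0, \exp_{y_0}(w)) = \|w\| + o(\|w\|)$ for small $w$) implies
\begin{equation*}
    d_{\mathcal Y}(y_0, H(x)) = \|DH(x_0)[v]\| + o(\|v\|) \quad \text{as } x \to x_0.
\end{equation*}

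The final step is a direct computation of the limsup. Dividing by $d_{\mathcal X}(x_0,x) = \|v\|$ and letting $v \to 0$ along all directions gives
\begin{equation*}
    \kappa_{x \mapsto y}[F](x_0,y_0,z_0) = \limsup_{v \to 0} \frac{\|DH(x_0)[v]\|}{\|v\|} = \sup_{\dot x \ne 0} \frac{\|DH(x_0)[\dot x]\|}{\|\dot x\|} = \|DH(x_0)\|,
\end{equation*}
where the middle equality uses continuity of the linear map $DH(x_0)$ on the tangent space. The only delicate point is ensuring that the $o(\|v\|)$ remainder is uniform in direction so that the limsup really equals the supremum over unit tangent vectors; this follows from smoothness of $H$ and $\exp$, together with the compactness of the unit sphere in $\tspace_{x_0}\mathcal X$. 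I expect no further obstacle, as the result is a standard consequence of \cref{thm: deriv soln map CREP} once the definition of the condition number is unfolded.
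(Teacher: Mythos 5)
Your proof is correct and is, in substance, the same as the paper's: the paper's proof is a one-line appeal to Rice's theorem (the condition number of evaluating a smooth map between Riemannian manifolds at a point is the operator norm of its differential there), and your argument is precisely a self-contained derivation of that fact via the exponential-map parametrisation. The only minor remark is that within the injectivity radius the normal-coordinate identity $d_{\mathcal Y}(y_0,\exp_{y_0}(w))=\|w\|$ holds exactly, so the $o(\|w\|)$ correction there is unnecessary, though harmless.
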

\begin{proof}
    By definition, $\kappa_{x \mapsto y}[F](x_0,y_0,z_0)$ is the condition number of evaluating $H$ at $x_0$, which is $\norm{DH(x_0)}$ by Rice's characterisation of the condition number \cite{Rice1966}.
\end{proof}

Finally, we show that solving for $y$ in the equation $F(x,y,z) = c$ is at least as well-conditioned as solving for $(y,z)$.

\begin{proof}[Proof of \cref{prop: solving for some variables is easier}]
	Let $H\colon \mathcal X \to \mathcal Y$ and $\widehat{\mathcal Z}$ respectively be 
    the solution map and a neighbourhood of $z_0$ obtained by applying \cref{thm: main theorem informal}
    to the CREP $F(x,y,z) = c$. Likewise, add a dummy variable $\zeta := 0$ to $F$ to    
    define the CREP $\overline{F}(x,(y,z),\zeta) = c$ with input, output, and latent spaces $\mathcal X, \mathcal Y \times \mathcal Z$, and $\{0\}$, respectively, where $\overline{F}(x,(y,z),\zeta) := F(x,y,z)$. Let $\overline{H}\colon \mathcal X \to \mathcal Y \times \mathcal Z$ be its solution map as in \cref{thm: main theorem informal}.

    For $x$ in the domain of both $H$ and $\overline{H}$, we have 
    \begin{align*}
    d_{\mathcal Y \times \mathcal Z}(\overline{H}(x_0), \overline{H}(x))
    &=
    \min_{\substack{(y,z) \in \mathcal Y \times \mathcal Z\\F(x,y,z) = c}} d_{\mathcal Y \times \mathcal Z}((y_0, z_0), (y,z)) \\
    &\ge
    \min_{\substack{(y,z) \in \mathcal Y \times \mathcal Z\\F(x,y,z) = c}}
        d_{\mathcal Y}(y_0, y) \\
    &=
    \min_{\substack{(y,z) \in \mathcal Y \times \widehat{\mathcal Z
    }\\F(x,y,z) = c}}
    d_{\mathcal Y}(y_0, y) \\ 
    &=
    d_{\mathcal Y}(H(x_0), H(x))
    .\end{align*}
    If we divide both sides by $d_{\mathcal X}(x_0, x)$ and take the limit supremum as $x \to x_0$, we obtain $\kappa_{x \mapsto (y,z)}[F](x_0,y_0,z_0) \ge \kappa_{x \mapsto y}[F](x_0,y_0,z_0)$, as desired.

\end{proof}

\section{Numerical computation of the condition number}
\label{sec: compute cond}

This section presents two expressions for the condition number that may be more useful for computational purposes than the abstract \cref{cor: norm solution map}. The first one is a translation of the system \eqref{eq: formula DH abstract} into concrete linear equations. It is stated below in its most general form. When $\mathcal{X},\mathcal{Y},$ and $\mathcal{Z}$ are Euclidean spaces, the statement can be understood more concretely by identifying $\mathcal{X} = \tspace_{x_0}\mathcal X$ with $\mathbb{R}^{\dim \mathcal X}$ and likewise for $\mathcal Y$ and $\mathcal Z$, and the Riemannian metric with the inner product.

\begin{proposition}
    \label{prop: ChElim compute cond}
    Let $F(x,y,z) = c$ be a CREP with $F\colon \mathcal{X} \times \mathcal{Y} \times \mathcal Z \to \mathbb{R}^{N}$, where $\mathcal X$ and $\mathcal Y$ have a Riemannian metric. At any solution $(x_0, y_0, z_0)$, write the partial derivatives $\pdv{F}{x}, \pdv{F}{y}$, and $\pdv{F}{z}$ as matrices $J_x, J_y, J_z$ in coordinates with respect to bases of $\tspace_{x_0}\mathcal X$ and $\tspace_{y_0}\mathcal Y$ that are orthonormal in the chosen Riemannian metrics and an arbitrary basis of $\tspace_{z_0}\mathcal Z$.
    Let $Q$ be a matrix such that $\operatorname{span}Q = (\operatorname{span}J_z)^\perp$ and let $U_y$ and $U_z$ be matrices that satisfy    
    $J_yU_y + J_zU_z = 0$ and $\ker \left[ J_y \quad J_z\right] = \operatorname{span}\left([U_y^T \quad U_z^T]^T\right)$.
    Then the columns of the matrix $A := \begin{bmatrix}
        Q^T J_y \\
        U_y^T
    \end{bmatrix}$ are linearly independent. Furthermore,
    $$
    \kappa_{x \mapsto y}[F](x_0, y_0, z_0) = 
    \norm{
        A^+
        \begin{bmatrix}
            - Q^T J_x \\
            0
        \end{bmatrix}
    }
    ,$$
	where $A^+$ is any left inverse of $A$, such as the Moore--Penrose inverse, and $\norm{\cdot}$ is the operator 2-norm.
\end{proposition}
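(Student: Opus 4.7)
The plan is to combine \cref{cor: norm solution map}, giving $\kappa_{x \mapsto y}[F](x_0,y_0,z_0) = \|DH(x_0)\|$, with the abstract characterisation \eqref{eq: formula DH abstract} of $DH(x_0)$ from \cref{thm: deriv soln map CREP}. Each condition in \eqref{eq: formula DH abstract} will be translated into a concrete linear condition on $\dot y := DH(x_0)[\dot x]$ in the chosen orthonormal coordinates, and stacking the two conditions will produce the matrix equation $A\dot y = \begin{bmatrix} -Q^T J_x \\ 0 \end{bmatrix} \dot x$. Once $A$ has linearly independent columns, any left inverse $A^+$ recovers the unique $\dot y$ from the (automatically consistent) right-hand side, and the orthonormality of the coordinate bases on $\tspace_{x_0}\mathcal X$ and $\tspace_{y_0}\mathcal Y$ makes the Riemannian operator norm coincide with the Euclidean 2-norm, producing the stated formula via \cref{cor: norm solution map}.

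For the first line of \eqref{eq: formula DH abstract}, a pair $(\dot x, \dot y)$ lies in $D\pi_{\mathcal X \times \mathcal Y}[\ker DF]$ precisely when some $\dot z$ satisfies $J_x\dot x + J_y \dot y + J_z\dot z = 0$, i.e.\ $J_y\dot y + J_x\dot x \in \operatorname{span} J_z$. Since $\operatorname{span} Q = (\operatorname{span} J_z)^\perp$, this is equivalent to $Q^T J_y\dot y = -Q^T J_x\dot x$, which is the top block row. For the second line, $\ker[J_y\;J_z]$ is the column span of $\begin{bmatrix} U_y \\ U_z \end{bmatrix}$, so its image under $D\pi_{\mathcal Y}$ equals $\operatorname{span} U_y$; orthonormality of the basis of $\tspace_{y_0}\mathcal Y$ turns the condition $\operatorname{span} DH(x_0) \perp \operatorname{span} U_y$ into $U_y^T \dot y = 0$, the bottom block row.

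The main step is to verify the column-rank claim on $A$. Suppose $A\dot y = 0$. The top block $Q^TJ_y \dot y = 0$ forces $J_y \dot y \in \operatorname{span} J_z$, so there is a $\dot z$ with $(\dot y,\dot z) \in \ker[J_y\;J_z]$, and the spanning hypothesis on $U_y,U_z$ then yields $\dot y \in \operatorname{span} U_y$. The bottom block $U_y^T \dot y = 0$ says $\dot y \perp \operatorname{span} U_y$, so $\dot y$ is both in and orthogonal to the same subspace and therefore vanishes. Consistency of $A\dot y = \begin{bmatrix} -Q^T J_x \\ 0 \end{bmatrix}\dot x$ for every $\dot x$ is automatic because \cref{thm: deriv soln map CREP} supplies $DH(x_0)[\dot x]$ as a solution, so the coordinate matrix of $DH(x_0)$ equals $A^+ \begin{bmatrix} -Q^T J_x \\ 0 \end{bmatrix}$ independently of the choice of left inverse (all left inverses agree on the column space of $A$), and the conclusion follows. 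The anticipated obstacle is exactly the column-independence argument: it requires the careful use of the spanning hypothesis on $(U_y,U_z)$ to pass from $Q^T J_y \dot y = 0$ back to the full kernel of $[J_y\;J_z]$ before combining with $U_y^T \dot y = 0$.
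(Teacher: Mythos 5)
Your proof is correct and follows essentially the same route as the paper's: you translate the two abstract conditions of \eqref{eq: formula DH abstract} into the top and bottom block rows of $A$ in exactly the same way, then invoke \cref{cor: norm solution map} together with the observation that orthonormality of the coordinate bases makes the Riemannian operator norm agree with the Euclidean 2-norm. The one place you genuinely diverge is the column-independence claim: the paper infers that $A$ has full column rank indirectly, from the fact that \eqref{eq: formula DH abstract} (equivalently \eqref{eq: concrete system DH}) has a unique solution by \cref{thm: deriv soln map CREP}; you instead give a direct algebraic argument that $A\dot y = 0$ forces $\dot y = 0$ by passing from $Q^T J_y \dot y = 0$ through $\ker\!\left[J_y \;\; J_z\right]$ to $\dot y \in \operatorname{span} U_y$ and intersecting with $U_y^T\dot y = 0$. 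Your version is a little more self-contained, since it makes the injectivity of $A$ visibly a consequence of the rank hypotheses encoded in $Q, U_y, U_z$ rather than of the well-posedness theorem; the paper's version is shorter since that theorem has already done the work. Both are valid, and your argument for automatic consistency of the right-hand side (existence of $DH(x_0)[\dot x]$ as a solution) is exactly the point that licenses using an arbitrary left inverse.
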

\begin{proof}
    In the following, we evaluate all derivatives implicitly at $(x_0, y_0, z_0)$ or its projections.
    We start by finding concrete equations for the first constraint in \eqref{eq: formula DH abstract}. For any $(\dot{x}, \dot{y}) \in \tspace_{(x_0, y_0)} (\mathcal X \times \mathcal Y)$, we have
    \begin{align}
    (\dot x, \dot y) \in D\pi_{\mathcal X \times \mathcal Y}\left[\ker DF \right]
    & \Leftrightarrow \pdv{F}{x} \dot{x} + \pdv{F}{y} \dot y + \pdv{F}{z} \dot z = 0 \text{ for some }\dot z \in \tspace_{z_0}\mathcal Z \nonumber \\
    & \Leftrightarrow \pdv{F}{x} \dot{x} + \pdv{F}{y} \dot y \in \mathrm{span}\left(\pdv{F}{z}\right) \nonumber \\
    & \Leftrightarrow Q^T \left(\pdv{F}{x} \dot{x} + \pdv{F}{y} \dot y\right) = 0 \nonumber \\
    & \Leftrightarrow Q^T J_y \hat{\dot{y}} = - Q^T J_x \hat{\dot{x}}
    \label{eq: DH in span Z}
    ,\end{align}
    where $\hat{\dot x}$ and $\hat{\dot y}$ are the coordinates of $\dot x$ and $\dot y$, respectively.
    Similarly, for the second requirement in \eqref{eq: formula DH abstract}, it holds for any $\dot{y} \in \tspace_{y_0}\mathcal Y$ that
    \begin{align*}
        \dot y \perp D\pi_{\mathcal Y}\left[\ker \pdv{F}{(y,z)}\right]
        & \Leftrightarrow \hat{\dot y} \perp [\mathds{I} \quad 0] \ker \left( \left[ J_y \quad J_z \right] \right) \\
        & \Leftrightarrow \hat{\dot y} \perp \operatorname{span} U_y \\
        & \Leftrightarrow U_y^T \hat{\dot y} = 0
    .\end{align*}
    By combining these two observations, we see that \eqref{eq: formula DH abstract} is equivalent to the system 
    \begin{equation}
        \label{eq: concrete system DH}
    A \cdot DH
        =
        \begin{bmatrix}
            -Q^T J_x \\
            0
        \end{bmatrix},
        \quad \text{where} \quad A = 
        \begin{bmatrix}
            Q^T J_y \\
            U_y^T
        \end{bmatrix}
    .\end{equation}
    Since \eqref{eq: formula DH abstract}, or equivalently, \eqref{eq: concrete system DH} has a unique solution, $A$ has full rank and is thereby left-invertible. Hence, the Moore--Penrose inverse of $A$ is a left inverse \cite[\S 5.5.2]{Golub2013}. By \cref{cor: norm solution map}, this concludes the proof.
\end{proof}
\begin{remark}
    \Cref{prop: ChElim compute cond} suggests computing the derivative of the solution map by solving \eqref{eq: concrete system DH}. Although this system has precisely one exact solution, it may be overdetermined. It is possible to reduce the number of equations and keep the same solution. For instance, \eqref{eq: DH in span Z} expresses that a vector that is known to lie in $\operatorname{span} \pdv{F}{(x,y)}$ is also an element of $\operatorname{span} \pdv{F}{z}$. The minimal number of linear equations needed to express this is the codimension of $\operatorname{span} \pdv{F}{z} \cap \operatorname{span} \pdv{F}{(x,y)}$ as a subspace of $\operatorname{span} \pdv{F}{(x,y)}$, but the number of equations used in \eqref{eq: DH in span Z} is the codimension of $\operatorname{span} \pdv{F}{z}$ in $\mathbb{R}^N$. Methods to reduce the number of equations are omitted from this discussion for simplicity.
\end{remark}

Another characterisation of the condition number is given by the following statement. It captures the intuition that $DH(x_0)[\dot{x}]$ gives the smallest possible change to $y$ that solves the CREP when the input is perturbed by $\dot{x}$. Essentially, it uncovers where the defining equations \eqref{eq: formula DH abstract} come from: they are the critical point equations of a convex optimisation problem.
\begin{proposition}
    \label{lemma: DH is soln map of LS}
    Suppose that the equation $F(x,y,z) = c$ is a CREP with a Riemannian input and output space and a particular solution $(x_0,y_0,z_0)$. Then the solution of \eqref{eq: formula DH abstract} is 
    $$
    DH(x_0)\colon
    \dot{x} \mapsto \arg\min_{\dot{y}} \norm{\dot{y}}
    \,\,\text{s.t.}\,\,
    DF(x_0,y_0,z_0)[\dot{x},\dot{y},\dot{z}] = 0
    \,\,\text{for some}\,\, \dot{z} \in \tspace_{z_0}\mathcal{Z}
    .$$
    Consequently,
    $\kappa_{x \mapsto y}[F](x_0,y_0,z_0)$ is the operator norm of $DH(x_0)$.
\end{proposition}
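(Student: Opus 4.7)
\emph{Approach.} The plan is to interpret the two conditions in \eqref{eq: formula DH abstract} as, respectively, feasibility and optimality for the proposed least-squares problem. Concretely, for each $\dot{x}$ set
$$
S(\dot{x}) := \bigl\{\dot{y} \in \tspace_{y_0}\mathcal{Y} : DF(x_0,y_0,z_0)[\dot{x},\dot{y},\dot{z}] = 0 \text{ for some } \dot{z} \in \tspace_{z_0}\mathcal{Z}\bigr\}.
$$
I want to show that $S(\dot{x})$ is a nonempty affine subspace whose minimum-norm element is forced, by the standard Hilbert-space projection fact, to satisfy both conditions of \eqref{eq: formula DH abstract}; by uniqueness in \cref{thm: deriv soln map CREP}, that element must then equal $DH(x_0)[\dot{x}]$.

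\emph{Step 1: rewriting the first condition.} An element of $\ker DF$ is a triple $(\dot{x}',\dot{y}',\dot{z}')$ with $DF[\dot{x}',\dot{y}',\dot{z}'] = 0$, whose projection onto $\mathcal{X}\times\mathcal{Y}$ is $(\dot{x}',\dot{y}')$. Hence $(\dot{x},\dot{y}) \in D\pi_{\mathcal{X}\times\mathcal{Y}}[\ker DF]$ is equivalent to $\dot{y} \in S(\dot{x})$, turning the first line of \eqref{eq: formula DH abstract} into feasibility for the least-squares problem.

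\emph{Step 2: affineness of $S(\dot{x})$.} Nonemptiness follows because $F(x,(y,z))=c$ is an FCRE, so by \cref{prop: geometric characterisation of FCRE} the projection $\pi_{\mathcal{X}}\colon\mathcal{P}\to\mathcal{X}$ is a submersion; at $(x_0,y_0,z_0)$ this yields, for every $\dot{x}$, some $(\dot{y},\dot{z})$ with $DF[\dot{x},\dot{y},\dot{z}]=0$. For the direction subspace, I would compare two feasible pairs $(\dot{y}_i,\dot{z}_i)$, $i=1,2$: subtracting the two instances of $DF[\dot{x},\dot{y}_i,\dot{z}_i]=0$ gives $\pdv{F}{(y,z)}[\dot{y}_1-\dot{y}_2,\dot{z}_1-\dot{z}_2]=0$, whence $\dot{y}_1-\dot{y}_2 \in D\pi_{\mathcal{Y}}[\ker\pdv{F}{(y,z)}]$. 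Conversely, adding any element of $\ker\pdv{F}{(y,z)}$ to a feasible pair yields another feasible pair. Thus $S(\dot{x})$ is an affine translate of $V := D\pi_{\mathcal{Y}}[\ker\pdv{F}{(y,z)}]$.

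\emph{Step 3: optimality and conclusion.} On a Euclidean affine subspace $\dot{y}_\star + V$, the minimum-norm element is the unique $\dot{y}$ which is orthogonal to $V$; this is exactly the second line of \eqref{eq: formula DH abstract}. Hence the minimizer of $\|\dot{y}\|$ subject to $\dot{y}\in S(\dot{x})$ satisfies both conditions of \eqref{eq: formula DH abstract}. Since \cref{thm: deriv soln map CREP} asserts that $DH(x_0)[\dot{x}]$ is the unique vector satisfying this system, the minimizer equals $DH(x_0)[\dot{x}]$, proving the first claim. The formula $\kappa_{x\mapsto y}[F](x_0,y_0,z_0)=\|DH(x_0)\|$ is then immediate from \cref{cor: norm solution map}.

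\emph{Main obstacle.} Very little analytic work is required once the reinterpretation is set up; the entire content is already encoded in \eqref{eq: formula DH abstract}. The subtlest point is Step 2: affineness of $S(\dot{x})$ is not automatic for a general nonlinear system but is guaranteed here by the constant-rank hypotheses of \cref{def: CREP}, which make the direction subspace $V$ independent of $\dot{x}$ and match it precisely with the orthogonality condition in \eqref{eq: formula DH abstract}.
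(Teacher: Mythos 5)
Your proposal is correct and takes essentially the same approach as the paper: both rewrite the feasible set for $\dot{y}$ (your $S(\dot{x})$, the paper's $D\pi_{\mathcal Y}[L_{\dot x}]$) as an affine translate of $V = D\pi_{\mathcal Y}\bigl[\ker\pdv{F}{(y,z)}\bigr]$ and then identify the minimum-norm element with the unique vector orthogonal to $V$, i.e.\ the unique solution of \eqref{eq: formula DH abstract}. The only cosmetic difference is that you verify affineness and nonemptiness directly by subtracting feasible pairs and invoking the submersion property, whereas the paper works with the full set $L_{\dot{x}} \subseteq \tspace_{y_0}\mathcal Y \times \tspace_{z_0}\mathcal Z$ and projects afterward.
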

\begin{proof}
    Given any $\dot{x} \in \tspace_{x_0} \mathcal X$, write the set of $(\dot{y},\dot{z})$ that solve the linearisation of $F(x,y,z) = c$ as $L_{\dot x}$. That is,
    $$
    L_{\dot{x}} := \left\{ (\dot{y},\dot{z}) \in \tspace_{y_0} \mathcal Y \times \tspace_{z_0} \mathcal Z
    \,\middle\vert\, 
        DF(x_0,y_0,z_0)[\dot{x},\dot{y},\dot{z}] = 0
    \right\}
    .$$
    Note that $L_{0} = \ker \pdv{F}{(y,z)}$. Thus, \eqref{eq: formula DH abstract} defines $DH(x_0)[\dot{x}]$ as the unique element in $D\pi_{\mathcal Y}\left[L_{\dot x}\right] \cap D\pi_{\mathcal Y}\left[L_0\right]^\perp$.

    Fix any vector $\dot{x} \in \tspace_{x_0} \mathcal X$. The space $L_{\dot x}$ is defined by the linear system $\pdv{F}{(y,z)}[\dot{y},\dot{z}] = -\pdv{F}{x}\dot{x}$. Since $L_0$ is defined by the same equations, but with a different right-hand side, it follows from elementary linear algebra that $L_0$ and $L_{\dot x}$ are parallel. That is, $L_{\dot{x}} = L_0 + v_{\dot x}$ for some $v_{\dot x} \in \tspace_{y_0} \mathcal Y \times \tspace_{z_0} \mathcal Z$. Consequently, $D\pi_{\mathcal Y}\left[L_{\dot x}\right]$ and $D\pi_{\mathcal Y}\left[L_0\right]$ are parallel. 
    Hence, the vector in $D\pi_{\mathcal Y}\left[L_{\dot x}\right]$ with the smallest norm is orthogonal to $D\pi_{\mathcal Y}\left[L_0\right]$ and thereby satisfies the defining equations of $DH(x_0)[\dot{x}]$.

\end{proof}

\section{Application to Tucker decompositions}
\label{sec: Tucker}

Orthogonal Tucker decompositions, simply called \emph{Tucker decompositions} in this paper, are one of several generalisations of matrix decompositions to tensors of arbitrary order \cite{tucker1966some}. In this section, we work out an expression for the condition number of each factor in the decomposition. 

\subsection{Tucker decomposition as a CREP}
First, we review some basic definitions related to Tucker decompositions, which can be found in standard numerical references such as \cite{Kolda2009}.
Any tensor of order $D$ can be written in coordinates as an array $\mathpzc{A} \in \mathbb{R}^{n_1 \times \dots \times n_D}$ and may be expressed as a linear combination of tensor (or Kronecker) products, i.e.\
$$
\mathpzc{A} = \sum_{i=1}^{R} a_{i1} \otimes \dots \otimes a_{iD}
,$$
where $a_{ij} \in \mathbb{R}^{n_j}$ for all $i,j$. Given $D$ matrices $U_1,\dots,U_D$ where $U_j \in \mathbb{R}^{n_j}$, \emph{multilinear multiplication} of $U_1,\dots,U_D$ and $\mathpzc{A}$ is defined as 
$$
(U_1 \otimes \dots \otimes U_D) \mathpzc{A} := \sum_{r=1}^R (U_1 x_{i1}) \otimes \dots \otimes (U_D x_{iD})
.$$
Alternatively, if the map $\mathpzc{A} \mapsto \mathpzc{A}_{(j)}$ denotes reshaping into a matrix of dimensions $n_j \times \prod_{j' \ne j} n_j$, then $(U_1 \otimes \dots \otimes U_D) \cdot \mathpzc{A}$ is the unique tensor $\mathpzc{B}$ satisfying 
$$
\mathpzc{B}_{(j)} = U_j \mathpzc{A}_{(j)} (U_1 \kron \cdots \kron U_{j-1} \kron U_{j+1} \kron \cdots \kron U_D)^T
$$
for any $j = 1,\dots,D$, where $\kron$ is the Kronecker product. For $D = 2$, this definition reduces to the matrix product $(U \otimes V) A = UAV^T$.

A \emph{Tucker decomposition} of a tensor $\mathpzc{X} \in \mathbb{R}^{n_1\times\dots\times n_D}$ is a tuple $(\core, U_1,\dots,U_D)$ where $\core \in \mathbb{R}^{m_1 \times \dots \times m_D}$, the matrices $U_j \in \mathbb{R}^{n_j \times m_j}$ have orthonormal columns, and $\mathpzc{X} = (U_1 \otimes \dots \otimes U_D) \mathpzc{C}$. The minimal size $(m_1,\dots,m_D)$ of $\mathpzc{C}$ in a Tucker decomposition of $\mathpzc{X}$ is called the \emph{multilinear rank} of $\mathpzc{X}$. We study the problem of finding a decomposition given $\mathpzc X$.

The domain of the Tucker decomposition problem has the following well-understood smooth structure \cite{Koch2010}.
The set of real $n_1 \times \dots \times n_D$ tensors of multilinear rank $(m_1,\dots,m_D)$ is a smooth submanifold of $\mathbb{R}^{n_1 \times \dots \times n_D}$. If $m_i = n_i$ for all $i$, we write this set as $\mathbb{R}^{n_1 \times \dots \times n_D}_\star$.
Moreover, for any Tucker decomposition $\mathpzc{X} = (U_1 \otimes \cdots \otimes U_D) \mathpzc{C}$ with $\mathpzc{C} \in \mathbb{R}^{m_1 \times \dots \times m_D}_\star$, we have 
\begin{equation}
    \label{eq: orbits Tucker}
(U_1 \otimes \cdots \otimes U_D) \mathpzc{C}
=
(\tilde{U}_1 \otimes \cdots \otimes \tilde{U}_D) \widetilde{\mathpzc{C}}
\Leftrightarrow 
\mathpzc{C} = (Q_1 \otimes \cdots \otimes Q_D) \widetilde{\mathpzc{C}}
\text{ and }
\tilde{U}_i = U_i Q_i
\end{equation}
for some orthogonal matrices $Q_i \in \mathbb{R}^{m_i \times m_i}$. Hence, a Tucker decomposition where $\mathpzc{C}$ is of minimal size has a fixed number of degrees of freedom. This is the key property that makes it a CREP.
The set of real $n\times m$ matrices with orthonormal columns is known as the \emph{Stiefel manifold} $\mathrm{St}(n, m)$ \cite{Absil2008}.

The Tucker decomposition problem can be modelled as a CREP as follows. Let $\mathcal{X} := \mathbb{R}^{n_1 \times \dots \times n_D}$. Define 
\begin{align*}
\tuck\colon 
\mathbb{R}^{m_1 \times \dots \times m_D}_\star \times \mathrm{St}(n_1,m_1) \times \dots \times \mathrm{St}(n_D,m_D) & \to \mathcal{X} \\
(\core, U_1,\dots,U_D) & \mapsto (U_1 \otimes \dots \otimes U_D) \core
\end{align*}
and 
\begin{equation}
    \label{eq: Tucker eqn with z}
F_{\mathcal T}(\mathpzc{X}, \core, U_1,\dots,U_D) := \mathpzc{X} - \tuck(\core, U_1, \dots, U_D),
\end{equation} where $\mathpzc{X} \in \mathcal X$.
Then the Tucker decomposition problem is equivalent to solving $F_{\mathcal T}(\mathpzc{X}, \core, U_1,\dots,U_D) = 0$.

\subsection{Condition numbers}
With the notation from the previous subsection, we can study the condition number of each factor in the decomposition.
The sensitivity of the factor $U_d$ in the Tucker decomposition with respect to perturbations in $\mathpzc{X}$ is measured by $\kappa_{\mathpzc{X} \mapsto U_d}[F_{\mathcal{T}}]$.
The generic variable names $x,y,$ and $z$ used in the introduction would then refer to $\mathpzc{X}$, $U_d$, and $(\core,U_2,\dots,U_D)$, respectively.
Likewise, the sensitivity of $\core$ is measured by $\kappa_{\mathpzc{X} \mapsto \core}[F_{\mathcal{T}}]$, in which case $y = \core$ and $z = (U_1,\dots,U_D)$.
For simplicity, we omit the subscripts (e.g.,\ $x_0,y_0,z_0$) when referring to the particular solution where the condition number is evaluated.
By the following proposition, the condition number can be computed in terms of the singular values in the \emph{higher-order singular value decomposition} \cite{DeLathauwer2000}.
 
\begin{proposition}
    \label{prop: cond Tucker}
    Let $\mathcal X \in \mathbb{R}^{n_1 \times \dots \times n_D}$ be a tensor with a Tucker decomposition $\mathpzc{X} = \tuck(\core, U_1,\dots,U_D)$.
    Endow the domain of $F_{\mathcal{T}}$ with the Euclidean (i.e.\ Frobenius) norm.    
    Then, for all $d=1,\dots,D$,
    \begin{equation}
        \label{eq: cond Tucker U1}
    \kappa_{\mathpzc{X} \mapsto U_d}[F_{\mathcal{T}}](\mathpzc{X}, \core, U_1,\dots,U_D) = 
    \begin{cases}
        0 & \text{if $U_d$ is square,} \\
        \sigma_{\min}(\core_{(d)})^{-1}  & \text{otherwise,}
    \end{cases}
    \end{equation}
    where $\sigma_{\min}(\core_{(d)})$ is the smallest singular value of the $d$th flattening of $\core$.
    Furthermore,
    \begin{equation}
        \label{eq: cond Tucker core}
    \kappa_{\mathpzc{X} \mapsto \core}[F_{\mathcal{T}}](\mathpzc{X}, \core, U_1,\dots,U_D) = 1
    .\end{equation}
\end{proposition}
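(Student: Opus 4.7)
The plan is to invoke \cref{lemma: DH is soln map of LS}: for each choice of ``output'' variable, the derivative of the canonical solution map at $\mathpzc X$ sends $\dot{\mathpzc X}$ to the minimum-norm output satisfying the linearised constraint
\[
\dot{\mathpzc X} \,=\, (U_1 \otimes \cdots \otimes U_D)\dot{\core} \,+\, \sum_{j=1}^D (U_1 \otimes \cdots \otimes \dot U_j \otimes \cdots \otimes U_D)\core,
\]
with each $\dot U_j \in \tspace_{U_j}\mathrm{St}(n_j,m_j)$; the condition number is then the operator norm of this linear map by \cref{cor: norm solution map}. I would parametrise $\dot U_j = U_j A_j + U_j^\perp B_j$ with antisymmetric $A_j$. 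Differentiating the orbit relation \eqref{eq: orbits Tucker} at the identity shows that $\ker D\tuck$ consists of \emph{gauge directions} generated by antisymmetric matrices $E_j$; two solutions of the linearisation differ by such a direction, so I can freely use gauge shifts to minimise the relevant norms.

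\emph{Proof of \eqref{eq: cond Tucker U1}.} I flatten the linearisation in mode $d$, then multiply on the left by $(U_d^\perp)^T$ and on the right by $W_d := U_1 \kron \cdots \kron U_{d-1} \kron U_{d+1} \kron \cdots \kron U_D$, which has orthonormal columns. The terms containing $\dot{\core}$, $\dot U_j$ for $j \ne d$, and the $A_d$ part of $\dot U_d$ all lie in $\mathrm{span}(U_d)$ and vanish, leaving
\[
B_d\,\core_{(d)} \,=\, (U_d^\perp)^T \dot{\mathpzc X}_{(d)} W_d.
\]
Because $\core \in \mathbb R^{m_1\times\cdots\times m_D}_\star$, the flattening $\core_{(d)}$ has full row rank, so $B_d$ is uniquely determined. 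A gauge shift with $E_d = -A_d$ and $E_j = 0$ for $j\ne d$ sets $A_d$ to zero without altering $B_d$, so the minimum-norm $\dot U_d$ is $U_d^\perp B_d$, of Frobenius norm $\|B_d\|_F$. Computing the operator norm of $\dot{\mathpzc X} \mapsto B_d$ gives $\|\core_{(d)}^+\|_2 = 1/\sigma_{\min}(\core_{(d)})$, attained by $\dot{\mathpzc X}_{(d)} = U_d^\perp w v^T W_d^T$ with $v$ a right singular vector of $\core_{(d)}$ for $\sigma_{\min}$. If $m_d = n_d$, then $U_d^\perp$ is trivial, $\dot U_d = U_d A_d$ is killed by the gauge, and the condition number is zero.

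\emph{Proof of \eqref{eq: cond Tucker core}.} I apply the adjoint $(U_1^T \otimes \cdots \otimes U_D^T)$ to the linearisation; the identities $U_j^T U_j = I$ and $U_j^T U_j^\perp = 0$ collapse the right-hand side to
\[
(U_1^T \otimes \cdots \otimes U_D^T)\dot{\mathpzc X} \,=\, \dot{\core} + \sum_{j=1}^D (I \otimes \cdots \otimes A_j \otimes \cdots \otimes I)\core.
\]
Minimising $\|\dot{\core}\|_F$ over antisymmetric $A_j$ is a linear least-squares problem whose optimiser projects $(U_1^T \otimes \cdots \otimes U_D^T)\dot{\mathpzc X}$ onto the orthogonal complement of the gauge subspace $V := \{\sum_j (I \otimes \cdots \otimes A_j \otimes \cdots \otimes I)\core : A_j^T = -A_j\}$. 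Both this projection and the adjoint $(U_1^T \otimes \cdots \otimes U_D^T)$ are Frobenius-contractive, giving the bound $\|\dot{\core}_{\min}\|_F \le \|\dot{\mathpzc X}\|_F$. Equality is reached at $\dot{\mathpzc X} = (U_1 \otimes \cdots \otimes U_D)\core$: the pairing $\langle \core,\, (I \otimes \cdots \otimes A_j \otimes \cdots \otimes I)\core\rangle_F = \mathrm{tr}(A_j \core_{(j)} \core_{(j)}^T)$ vanishes because $A_j$ is antisymmetric whereas $\core_{(j)} \core_{(j)}^T$ is symmetric, so $\core$ lies in $V^\perp$ and the projection acts as the identity on it.

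\emph{Anticipated obstacle.} The delicate part is the interplay between minimality and gauge freedom: the linearised system is globally underdetermined, so one must show that the chosen projections (onto $(U_d^\perp)^T \cdot W_d$ for the factor case and onto the range of $(U_1 \otimes \cdots \otimes U_D)^T$ for the core case) simultaneously isolate a ``determined'' component and allow the Frobenius norm to be read off, while any remaining freedom is absorbed by an appropriate gauge shift without disturbing that component. The explicit verification that $\core \in V^\perp$ for the core argument, and that the supremum in the factor argument is realised by an admissible $\dot{\mathpzc X}$, is where I expect the main source of error.
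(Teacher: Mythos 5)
Your argument is correct and follows essentially the same route as the paper's proof: both use the gauge structure of $\ker D\tuck$ to isolate the minimum-norm tangent with $U_j^T\dot U_j = 0$, then read off the operator norm from the resulting linear map (a right inverse of $\core_{(d)}$ for the factors; a contraction with equality at $\dot{\mathpzc{X}}=\mathpzc{X}$ for the core). The only substantive difference is presentational: you re-derive two facts that the paper cites — the orthogonal tangent decomposition of $\tspace_{\mathpzc{X}}\mathcal X$ from \cite{Koch2010} becomes your explicit $A_j,B_j$ parametrisation, and the radial-direction normality cited from \cite[Proposition~6.2]{Dewaele2024} becomes your trace identity $\operatorname{tr}(A_j\core_{(j)}\core_{(j)}^T)=0$ — which makes your version slightly more self-contained without changing the underlying approach.
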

\begin{proof}[Proof of \eqref{eq: cond Tucker U1}]
    Since we can permute the arguments of $F_{\mathcal T}$, we can assume without loss of generality that $d = 1$. 
Let $DH(\mathpzc{X})$ be the differential of the canonical solution map and
let $\dot{\mathpzc{X}} \in \tspace_{\mathpzc{X}} \mathcal X$ be any tangent vector.
The equations \eqref{eq: formula DH abstract} that define $DH$ can be specialised to the Tucker decomposition problem as follows:
\begin{equation}
    \label{eq: defining eqns Tucker soln map}
\begin{cases}
    \dot{\mathpzc{X}} = D\tuck[\dot{\core},\dot{U}_1,\dots,\dot{U}_D] \quad\text{for some}\quad \dot{\core},\dot{U}_2,\dots,\dot{U}_D\\
    \dot{U}_1 \perp D\pi_{ \mathrm{St}(n_1,m_1)} \left[ \ker D\tuck \right]
\end{cases}
,\end{equation}
where $\dot{U_1} := DH(\mathpzc{X})[\dot{\mathpzc{X}}]$ and $D\tuck$ is evaluated at $(\core,U_1,\dots,U_D)$. 

	Before solving the system \cref{eq: defining eqns Tucker soln map}, we simplify its second condition.
By \eqref{eq: orbits Tucker}, 
$$
D\pi_{\mathrm{St}(n_1,m_1)} \left[\ker D\tuck\right] = 
\{ 
U_1 \dot{Q} \,\vert\, \dot{Q} \in \tspace_{\mathds{I}} O(m_1) 
\}
\subseteq 
\{ 
U_1 \dot{Q} \,\vert\, \dot{Q} \in \mathbb{R}^{m_1 \times m_1}
\}
,$$
where $O(m_1)$ is the orthogonal group.
Thus, the second constraint in \eqref{eq: defining eqns Tucker soln map} is satisfied on the sufficient (but not necessary) condition that $\dot{U}_1^T U_1 = 0$.

We can solve \eqref{eq: defining eqns Tucker soln map} for $\dot{U}_1$ as a function of $\dot{\mathpzc{X}}$ as follows.
We know from \cite{Koch2010} that every $\dot{\mathpzc{X}} \in \tspace_{\mathpzc{X}}\mathcal X$ admits a unique decomposition of the form 
\begin{align}
    \label{eq: decomposition Tucker tangent}
\dot{\mathpzc{X}} &= (\dot U_1 \otimes \dots \otimes U_D) \core + \dots + (U_1 \otimes \dots \otimes \dot U_D) \core + (U_1 \otimes \dots \otimes U_D) \dot{\core} \\
&=
D\tuck[\dot{\core},\dot{U}_1,\dots,\dot{U}_D] \nonumber
\end{align}
where $\dot{\core} \in \mathbb{R}^{m_1 \times \dots \times m_D}$ and $\dot{U}_d^T U_d = 0$ for all $d$. The factor $\dot{U}_1$ in this decomposition solves \eqref{eq: defining eqns Tucker soln map} given $\dot{\mathpzc{X}}$. Since $DH(\mathpzc{X})$ is the unique linear map that takes $\dot{\mathpzc{X}} \in \tspace_{\mathpzc{X}}\mathcal X$ to the solution $\dot{U}_1$ of \eqref{eq: defining eqns Tucker soln map}, $DH(\mathpzc{X})[\dot{\mathpzc{X}}]$ evaluates to the factor $\dot{U}_1$ in the decomposition \eqref{eq: decomposition Tucker tangent}, for any $\dot{\mathpzc{X}} \in \tspace_{\mathpzc{X}}\mathcal X$.

Assume that $U_1$ is square. Then the only matrix $\dot{U}_1$ that satisfies the constraint $\dot{U}_1^TU_1 = 0$ is the zero matrix. Since $DH(\mathpzc{X})[\dot{\mathpzc{X}}]$ satisfies this constraint for any $\dot{\mathpzc{X}}$, it follows that $DH(\mathpzc{X})$ is the zero map. Thus, the condition number is zero. In the remainder, we assume that $U_1$ is not square, so that the constraint $\dot{U}_1^TU_1 = 0$ is nontrivial.

The operator norm of $DH(\mathpzc{X})$ can be calculated as $\norm{DH(\mathpzc{X})|_{(\ker DH(\mathpzc{X}))^\perp}}$. That is, we can restrict $DH$ to its row space. To determine this space, note that all summands on the right-hand side of \eqref{eq: decomposition Tucker tangent} are pairwise orthogonal in the Euclidean inner product on $\mathbb{R}^{n_1 \times \dots \times n_D}$, since $\dot{U}_d^T U_d = 0$ for all $d$. In addition, $\ker DH(\mathpzc{X})$ consists of all $\dot{\mathpzc{X}}$ such that the first term in \eqref{eq: decomposition Tucker tangent} vanishes. Hence, 
$$
(\ker DH(\mathpzc{X}))^\perp = \left\{ (\dot{U}_1 \otimes U_2 \otimes \dots \otimes U_D) \core \,\middle\vert\, \dot{U}_1^T U_1 = 0 \right\}
,$$
so that
$$
DH(\mathpzc{X})|_{(\ker DH(\mathpzc{X}))^\perp}\colon
(\dot{U}_1 \otimes U_2 \otimes \dots \otimes U_D) \core \mapsto \dot{U}_1
.$$ 
If we represent tensors as their first standard flattening, the inverse of this map is $L: \dot{U}_1 \mapsto \dot{U}_1 \core_{(1)}(U_2 \kron \cdots \kron U_D)^T$ where $\kron$ is the Kronecker product. The singular values of $L$ are the singular values of $\core_{(1)}$. In conclusion:
$$
\norm{DH(\mathpzc{X})} =
\norm{DH(\mathpzc{X})|_{(\ker DH(\mathpzc{X}))^\perp}} = 
1 / \sigma_{m_1}(\core_{(1)})
.$$
\end{proof}
\begin{proof}[Proof of \eqref{eq: cond Tucker core}]
    We use the characterisation of $DH$ from \cref{lemma: DH is soln map of LS}, i.e.\
    $$
    DH(\mathpzc{X})[\dot{\mathpzc{X}}] = \arg\min_{\dot{\core}} \norm{\dot{\core}}
    \,\,\text{s.t.}\,\,
    \dot{\mathpzc{X}} = D\tuck[\dot{\core},\dot{U}_1,\dots,\dot{U}_D]
    \,\,\text{for some}\,\,
    \dot{U}_1,\dots,\dot{U}_D
    .$$
    For any $\dot{\mathpzc{X}} \in \tspace_{\mathpzc{X}}\mathcal X$, the minimum can be estimated by decomposing $\dot{\mathpzc{X}}$ uniquely as $\dot{\mathpzc{X}} = D\tuck[\dot{\core},\dot{U}_1,\dots,\dot{U}_D]$ with $\dot{U}_d^T U_d = 0$ for all $d$, as in \eqref{eq: decomposition Tucker tangent}.
    If we multiply \eqref{eq: decomposition Tucker tangent} on the left by $(U_1^T \otimes \dots \otimes U_D^T)$, all but one term vanish and we obtain 
    $
    \dot{\core} = (U_1^T \otimes \dots \otimes U_D^T) \dot{\mathpzc{X}}
    .$ Thus, in this specific decomposition of $\dot{\mathpzc{X}}$, we have $\norm{\dot{\core}} \le \norm{\dot{\mathpzc{X}}}$. It follows that the operator norm of $DH(\mathpzc{X})$ is at most 1.

    To show that $\norm{DH(\mathpzc{X})} = 1$, it suffices to find a tangent vector $\dot{\mathpzc{X}}$ such that $\norm{DH(\mathpzc{X})[\dot{\mathpzc{X}}]} = \norm{\dot{\mathpzc X}}$. Pick the radial direction $\dot{\mathpzc{X}} := \mathpzc{X} = (U_1 \otimes \dots \otimes U_D) \core$. We can see that $DH(\mathpzc{X})[\dot{\mathpzc{X}}] = \core$ by an argument from the proof of \cite[Proposition 6.2]{Dewaele2024}, which we repeat here. 

    The kernel of $D\tuck$ is the tangent space to the preimage $\tuck^{-1}(\mathpzc{X})$. Since the projection of $\tuck^{-1}(\mathpzc{X})$ onto the first component is the orbit of $\core$ under the action of $O(m_1) \times \dots \times O(m_D)$ (see e.g. \eqref{eq: orbits Tucker}), it is contained in the sphere of radius $\norm{\core}$. Hence, the radial direction $\dot{\core} = \core$ is normal to $D\pi_{\mathbb{R}^{n_1 \times \dots \times n_D}_\star}\left[\ker D\tuck\right]$ and thereby solves the critical point equations \eqref{eq: formula DH abstract}. It follows that $\core = DH(\mathpzc{X})[\dot{\mathpzc{X}}]$ when $\dot{\mathpzc{X}} = \mathpzc{X}$. Since $\norm{\core} = \norm{\mathpzc{X}}$, this completes the proof.

\end{proof}

\Cref{prop: cond Tucker} has the following heuristic explanation. The factor $U_1$ gives a basis for the column space of the flattened tensor $\mathpzc{X}_{(1)} \in \mathbb{R}^{n_1 \times n_2\cdots n_D}$. 
If $U_1$ is square, then $\operatorname{span} (\mathpzc{X}_{(1)}) = \mathbb{R}^{n_1}$. If a perturbed tensor $\widetilde{\mathpzc{X}}$ is sufficiently close to $\mathpzc{X}$, its column space is also $\mathbb{R}^{n_1}$ by the Eckart--Young theorem \cite[Theorem IV.4.18]{Stewart1990}. Therefore, $\widetilde{\mathpzc{X}}$ admits a Tucker decomposition whose first factor is $U_1$.
Alternatively, if $U_1$ is not square, its column space may rotate as $\mathpzc{X}$ is perturbed to $\widetilde{\mathpzc{X}}$. Standard results such as Wedin's $\sin \Theta$-theorem \cite[Theorem V.4.4]{Stewart1990} bound the largest rotation angle in terms of the size of the perturbation to $\widetilde{\mathpzc{X}}$ and $\sigma_{m_1}(\mathpzc{X}_{(1)})^{-1}$. Thus, it is natural to expect the latter quantity to be the condition number, and this expectation is confirmed by \cref{prop: cond Tucker}.

\begin{remark}
    By \cite[Section 14.3.2]{Burgisser2013}, the condition number of computing the image of a matrix $X \in \mathbb{R}^{n_1 \times n_2}$ of rank $m$ is $1 / \sigma_m(X)$.
    Since computing the $U_1$ factor of the Tucker decomposition of a tensor $\mathpzc{X}$ is equivalent to computing an orthonormal basis of $\operatorname{span}(\mathpzc{X}_{(1)})$, one might wonder about the difference between this condition number and the expression in \cref{prop: cond Tucker}. There are two main conceptual differences:
    \begin{enumerate}
        \item The formulation used in \cite{Burgisser2013} quotients out the choice of basis for the image in order to obtain a unique solution. By contrast, \cref{prop: cond Tucker} is about the condition number in \cref{def: cond CREP}, which defines the error in terms of a least-squares projection.
        \item The input space in \cref{prop: cond Tucker} consists of all tensors of multilinear rank $(m_1,\dots,m_D)$. If we flatten a tensor $\mathpzc{X}$ to $\mathpzc{X}_{(1)}$ and apply the approach from \cite{Burgisser2013}, the corresponding condition number takes all perturbations $\widetilde{\mathpzc{X}}$ of $\mathpzc{X}$ into account such that $\widetilde{\mathpzc{X}}_{(1)}$ has rank $m_1$. By contrast, our approach only considers perturbations of $\mathcal{X}$ to $\widetilde{\mathcal{X}}$ that preserve the multilinear rank. That is, our approach has a more constrained input space. 
    \end{enumerate}
\end{remark}

\begin{remark}
\Cref{prop: cond Tucker} fits in perfectly with two earlier results. 
We know from the general result of \cref{prop: solving for some variables is easier} that solving for any one solution variable (e.g., one factor of the decomposition) is at most as ill-conditioned as solving for all solution variables combined. It does not follow in general that there is a single variable that is as ill-conditioned as all variables combined. However, by \cite[Proposition 6.2]{Dewaele2024} and \cref{prop: cond Tucker}, this is the case for the Tucker decomposition: the condition number of the full decomposition is $\max \left(\{1\} \cup \{\sigma_{\min}(\mathpzc{\core}_{(d)})^{-1}\,\vert\, m_d < n_d\} \right)$, i.e.\ the maximum of the condition numbers of the individual factors.
\end{remark}

\section{Conclusion}
\label{sec: conclusion}
In this article, we proposed a condition number for constant-rank elimination problems (CREPs). The condition number estimates the minimal change in the solution variable $y$ for the worst-case infinitesimal perturbation to the input $x$, keeping the latent variable $z$ close to its reference value. By measuring the error this way, we find a lower bound for the condition number of a CREP based on the eliminated variables: if solving for any subset of the variables is ill-conditioned, so must be solving for all variables combined. Since problems with multiple solutions are allowed, the results can be combined with a sensitivity analysis of subsets of the equations defining the CREP.

The condition number of a CREP can be characterised in terms of the partial derivatives of the defining equations. By using this result, we derived a condition number of each factor in an orthogonal Tucker decomposition. The results confirm two simple intuitions. First, the $d$th basis in a Tucker decomposition is ill-conditioned insofar as the $d$th flattening of the tensor has a small singular value. Second, the condition number of the decomposition as a whole equals that of the most sensitive factor.

\section{Acknowledgements}
I thank Nick Vannieuwenhoven, Paul Breiding, Joeri Van der Veken, Mariya Ishteva, Bart Vandereycken, and Carlos Beltr\'an for reviewing the version of this paper that appeared as a chapter in my doctoral dissertation at KU Leuven.

\printbibliography

\end{document}